\theoremstyle{plain}
\newtheorem{thm}{Theorem}[section]
\newtheorem{cor}[thm]{Corollary}
\newtheorem{lem}[thm]{Lemma}
\newtheorem{example}[thm]{Example}
\newtheorem{prop}[thm]{Proposition}
\theoremstyle{definition}
\newtheorem{defn}[thm]{Definition}
\theoremstyle{remark}
\title{Units in near-rings}
\author{Tim Boykett}
\address{Institut f\"ur Algebra, Johannes Kepler Universit\"at Linz and Time's Up}
\email{tim.boykett@jku.at and tim@timesup.org}
\urladdr{http://timesup.org}
\author{Gerhard Wendt}
\address{Institut f\"ur Algebra, Johannes Kepler Universit\"at Linz}
\email{Gerhard.Wendt@jku.at}
\urladdr{http://www.algebra.uni-linz.ac.at}
\subjclass[2010]{16Y30}
\keywords{fixedpointfree automorphisms, nearfields, group of units, simple near-rings, endomorphism semigroups}
\thanks{This work has been supported by grant P23689-N18 of the Austrian National Science Fund (FWF)}
\begin{document}
\begin{abstract} 
We investigate near-ring properties that generalize nearfield properties about units.
We study  zero symmetric near-rings $N$ with identity with two interrelated properties:
the units with zero form an additive subgroup of $(N,+)$; the units act without fixedpoints on $(N,+)$. 
There are many similarities between these cases, but also many differences. Rings with these properties are fields, near-rings allow more possibilities, which are investigated.
Descriptions of  constructions are obtained and used to create examples showing the two properties are independent but related.
Properties of the additive group as a $p$-group are determined and it is shown that proper examples are neither simple nor $J_2$-semisimple.
\end{abstract}

\maketitle
\section{Introduction}

Within nearring theory, the subjects of planar nearrings and nearfields form a strong core of well-understood structure theory.
Two important tools are used, namely (right) translation maps and the properties of units.

In this paper we investigate generalisations of nearfields and planar nearrings in order to examine the structural implications. 
The first property relates to the additive closure of the units in a nearring, the second property relates to the nature of the right translation maps as fixed point free.
We see similar results proven using similar techniques in each case.

We commence with an introduction to the relevant aspects of nearring theory, namely nearfields, planar nearrings and fixed point free transformations. Then we consider the case of rings and show that our properties lead to rings with either property being fields.
We follow this up with a general construction for nearrings and show a construction technique developed from near-vector-space considerations for examples having both properties, plus two specific examples of near-rings having our two properties independently.

The next section investigates the structure of the additive group for each of these properties. 
We then investigate several special classes of near-rings having the properties, before concluding with some open problems that suggest themselves for future work.

\section{Background}

A \emph{near-ring} $(N,+,*)$ is a group $(N,+)$ with identity $0$ and a semigroup $(N,*)$ with the right distributive law $\forall a,b,c\in N,\, (a+b)*c=a*c+b*c$. 
We use 0-symmetric nearrings such that $0*n=0*n=0$. 
If there is no risk of confusion we will often omit the symbol $*$. We will be mostly concerned with near-rings satisfying the DCCN, the descending chain condition for $N$-subgroups of $N$. 
If a near-ring $N$ is a ring, then the DCCN is the same as the DCCL for rings, the descending chain condition on left ideals. 
However, some results only apply in the finite case.

Let $U$ be the set of all units, i.e.\ invertible elements w.r.t multiplication $*$.
Then $(U,*)$ is a group, the \emph{group of units} of the near-ring. 
A near-field is a near-ring in which the non-zero elements are all units, forming a group under multiplication.

 An automorphism $\phi$ of a group $(N,+)$ is said to have a \emph{fixed point} $n \in N$ if $\phi(n)=n$. If $0$ is the only fixed point of $\phi$, then $\phi$ is called  \emph{fixed point free} (abbreviated by fpf in the following) on $N$. A group of automorphisms $\Phi$ acting on the group $(N,+)$ is called a \emph{fixedpointfree automorphism group} if every non-identity automorphism in $\Phi$ is fpf.

For $u \in N$ let $\psi_u: N \longrightarrow N, n \mapsto n*u$ be the \emph{right translation map} induced by $u$. Since the right distributive law holds, $\psi_u$ is an endomorphism of $(N,+)$. If $u$ is a unit, then  $\psi_u$ is an automorphism. The converse also holds. Let $\psi_r$ be an automorphism. So, there is an element $r_1$ such that $\psi_r(r_1)=r_1*r=1$. Moreover, $r=1*r=r*1=r*(r_1*r)=(r*r_1)*r$. Since $\psi_r$ is injective, $1*r=(r*r_1)*r$ implies $1=r*r_1$ and so $r$ is a unit.

For a set $S \subseteq N$ we define $\Psi_S:=\{\psi_s \mid s \in S\}$. 
Therefore, $(\Psi_U, \circ)$ is a group of automorphisms of $(N,+)$, under function composition.

\begin{lem}\label{Lemma0} Let $N$ be a zero symmetric near-ring with identity $1$ and DCCN. 
Then $n \in N$ has a multiplicative inverse iff $\psi_n$ is injective.
\end{lem}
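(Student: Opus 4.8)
The plan is to prove the two implications separately; the descending chain condition is needed only for the direction ``$\psi_n$ injective $\Rightarrow$ $n$ invertible''. For the forward direction, if $n$ has an inverse $m$ with $mn=nm=1$, then for every $x\in N$ one has $(\psi_m\circ\psi_n)(x)=(xn)m=x(nm)=x$, so $\psi_m$ is a left inverse of $\psi_n$ and $\psi_n$ is injective; this is just the remark made above that units induce automorphisms of $(N,+)$.

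For the converse, assume $\psi_n$ is injective. First I would produce a descending chain of $N$-subgroups by iterating $\psi_n$. For $k\ge 0$ set $M_k:=\psi_n^{\,k}(N)=Nn^k$, with $M_0=N$. Since each $\psi_n^{\,k}$ is an endomorphism of $(N,+)$ (right distributivity), $M_k$ is a subgroup of $(N,+)$; and $N M_k=N(Nn^k)=(NN)n^k\subseteq Nn^k=M_k$ using associativity of $*$, so $M_k$ is an $N$-subgroup of $N$. Moreover $M_{k+1}=Nn^{k+1}=(Nn)n^k\subseteq Nn^k=M_k$, so $N=M_0\supseteq M_1\supseteq M_2\supseteq\cdots$ is a descending chain of $N$-subgroups; by the DCCN it stabilises, say $M_k=M_{k+1}$ for some $k$.

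The key step is to transport this stabilisation back to $k=0$ using injectivity. I claim that $M_j=M_{j+1}$ implies $M_{j-1}=M_j$ whenever $j\ge 1$: given $x\in M_{j-1}$ we have $\psi_n(x)\in M_j=M_{j+1}=\psi_n(M_j)$, hence $\psi_n(x)=\psi_n(y)$ for some $y\in M_j$, and injectivity of $\psi_n$ forces $x=y\in M_j$. Applying this repeatedly gives $N=M_0=M_1=Nn$, so $\psi_n$ is onto and therefore bijective. In particular $r_1n=1$ for some $r_1\in N$, and then $\psi_n(nr_1)=(nr_1)n=n(r_1n)=n=\psi_n(1)$ yields $nr_1=1$ by injectivity; thus $r_1$ is a two-sided multiplicative inverse of $n$. (Once $\psi_n$ is known to be an automorphism one could instead simply invoke the computation preceding the lemma statement.)

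The only points requiring care are checking that each $Nn^k$ really is an $N$-subgroup — additive closure from the endomorphism property of $\psi_n$ and absorption from associativity of $*$ — and the downward induction that converts DCCN-stabilisation of the chain into surjectivity of $\psi_n$; everything else is formal manipulation. I expect the descent argument to be the genuine content, since it is exactly where injectivity (rather than mere surjectivity) is exploited together with the chain condition.
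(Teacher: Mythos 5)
Your proof is correct and follows essentially the same route as the paper: both apply the DCCN to the chain $Nn \supseteq Nn^2 \supseteq \cdots$ and then use injectivity of $\psi_n$ to pull the stabilisation back to $N=Nn$. The only cosmetic differences are that the paper descends in one step via injectivity of $\psi_{n^k}$ (rather than your stepwise downward induction) and obtains two-sidedness of the inverse by repeating the argument for the left inverse $l$, whereas you get it directly from injectivity of $\psi_n$; both variants are fine.
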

\begin{proof} Suppose that $n$ is such that $\psi_n$ is injective. By the DCCN there exists a natural number $k$ such that $Nn^k=Nn^{k+1}$. Thus, for all $m \in N$ there exists a $j \in N$ such that $mn^k=jn^{k+1}$. Consequently, $(m-jn)n^k=0$. Since $\psi_n$ is injective, also $\psi_{n^k}$ is injective which results in $m=jn$. Thus, there is an element $l \in N$ such that $1=ln$. Thus  $\psi_l$ is injective and in the same way we again see that there is an element $h \in N$ such that $1=hl$. So we have $h=hln=1n=n$ and we see that $l$ is the inverse of $n$. The rest is clear.
\end{proof}

Let $(N,+,*)$ be a nearring. $(M,+) \leq (N,+)$ is an $N$-group if $\forall n\in N$, $m\in M$, $nm\in M$.
Then $Aut_N(M) = \{f \in Aut(M) \vert \forall n \in N,\, f(nm)=nf(m) \}$.

A \emph{planar nearring} is a nearring $(N,+,*)$ such that every equation of the form $xa=xb+c$ with 
$\psi_a\neq\psi_b$ has a unique solution $x$ and there are at least three distinct right translation maps.
The right translation maps for planar nearrings are all fpf automorphisms of the group $(N,+)$.

Similarly  right translations $\psi_n$ for nonzero elements $n$ of a nearfield are fpf automorphisms.
Note that a planar nearring with identity is a nearfield.

With these notes we have set the ground for our investigations.
In this paper we study near-rings which have special properties concerning their units, which we introduce as follows.

In a near-field, the sum of two units is a unit or zero,
while in general, the units of a near-ring are not closed w.r.t.\ addition. 
On one hand, we are interested in near-rings where $(U \cup \{0\},+)$ is a subgroup of $(N,+)$. 
Since $(U,*)$ is a group, this is equivalent to saying that $(U \cup \{0\},+,*)$ is a subnear-field of $(N,+,*)$. Trivial examples for such type of near-rings are  near-fields but we will see that these are not the only examples.

\begin{defn} An \emph{f-near-ring} $N$ is a zero symmetric near-ring with identity and set of units $U$ where $(\Psi_U, \circ)$ acts as a fixedpointfree automorphism group on the additive group of the near-ring. 
An \emph{a-near-ring} $N$ is a zero symmetric near-ring with identity and set of units $U$ such that $U_0$, the set of units of the near-ring adjoined with the zero of the near-ring, forms a subnear-field of the near-ring w.r.t. the near-ring operations.
Near-rings with both properties will be called \emph{af-near-rings}.
\end{defn}

Note that if the group of units is trivial, i.e.\ consists only of the identity, then the concept of f-near-ring is vacuous. 
In the next section we show that an a-ring with DCCL and with only one unit is ${\mathbb Z}_2^{k}$, $k$ a natural number. a-nearrings with only one unit have elementary abelian 2-groups additively, but otherwise remain open.

In the next section we look at a- and f-near-rings which are rings and show that they are all fields.
Following that, we look at the general structure of a- and f-near-rings and show that many examples exist.

\section{The ring case}

We first study rings which are a-near-rings, f-near-rings respectively. In case we restrict to rings with DCCL we will see that such rings are fields. The authors are not aware of a ring theoretic result which has a similar content as the following Theorem. Therefore we also give a full proof.
 
\begin{thm}\label{Lem1} Let $(R,+,*)$ be a ring with identity and  group of units $U$ such that $|U| \geq 2$. Then the following are equivalent: 
\begin{enumerate} 
\item[a.] $R$ is a field,
\item[b.] $R$ is an a-near-ring with DCCL, 
\item[c.] $R$ is an f-near-ring with DCCL.
\end{enumerate}
\end{thm}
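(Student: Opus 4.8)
The plan is to establish the cycle $a \Rightarrow b \Rightarrow a$ and $a \Rightarrow c \Rightarrow a$, so that all three become equivalent; since fields trivially satisfy both (b) and (c), the real work is in the two implications $b \Rightarrow a$ and $c \Rightarrow a$. I would begin with the easy direction: if $R$ is a field then $R$ is commutative, Artinian (hence has the DCCL), its nonzero elements are exactly the units, and $U_0 = R$ is obviously a subfield, so (b) holds; similarly every nonzero right translation $\psi_u$ is multiplication by a unit in a field, which is fpf because $n u = n$ forces $n(u-1)=0$ and $u \neq 1$ gives $n = 0$, so (c) holds.

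For $b \Rightarrow a$, assume $R$ is an a-ring with DCCL. The first step is to note that $U_0$ is a subnear-field of $R$, which for a ring means a subfield $K \subseteq R$. I would then argue that $R$ has no nonzero zero divisors: if $ab = 0$ with $a \neq 0$, I want to reach a contradiction using that $U_0$ is additively closed — the idea is that in an Artinian ring with identity, failure of the integral-domain property produces, via the DCCL, a nontrivial idempotent or a nilpotent, and either of these interacts badly with the additive group generated by the units. Concretely, if $e$ is a nontrivial idempotent then $1-e$ is also idempotent and $e, 1-e \notin U$ while $e = 1 - (1-e)$ expresses a non-unit as a difference of... one must be careful here; the cleaner route is: in an Artinian ring the units are precisely the elements not lying in any maximal left ideal, equivalently $u$ is a unit iff $u \notin J(R)$ is handled together with semisimplicity. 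So the key step I would push on is: $U_0$ being an additive subgroup forces $R \setminus U$ to be an additive subgroup too (being the complement-closure question), and $R \setminus U$ contains $0$; combined with DCCL this should force $R \setminus U$ to be the unique maximal ideal, making $R$ local, and then $U_0$ additively closed plus $R = U_0 \cup (R\setminus U)$ with both additive subgroups of the same group forces one of them to be everything — since $|U| \geq 2$ we get $U_0 = R$, i.e. $R$ is a field.

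For $c \Rightarrow a$, assume $R$ is an f-ring with DCCL, so $\Psi_U$ acts fixed-point-freely on $(R,+)$. The plan is to exploit that each $\psi_u$ for $u \in U$ is an fpf automorphism of the finite-length (hence, in the key cases, finite or at least well-behaved) group $(R,+)$, and that $\psi_u - \mathrm{id} = \psi_{u-1}$ (here ring commutativity of addition and distributivity give $nu - n = n(u-1)$) is therefore bijective whenever $u \neq 1$; hence $u - 1$ is a unit or zero for every unit $u$. This says $U \subseteq 1 + U_0$, and iterating/translating shows $U_0$ is closed under $x \mapsto x - 1$ hence under addition of $1$, and then by scaling ($U_0$ is multiplicatively closed) under all of addition — so again $U_0$ is an additive subgroup, reducing $c \Rightarrow a$ to the case already handled, or directly forcing $R = U_0$ a field. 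I would also need the standard fact that an fpf automorphism of prime order on an abelian group, together with DCCL, restricts the structure (Burnside/Thompson-type input) — but for a ring I expect the elementary divisibility argument above to suffice without invoking those.

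The main obstacle I anticipate is the $b \Rightarrow a$ step, specifically showing that additive closure of $U_0$ forces $R$ to be local (equivalently that the non-units form an additive subgroup): a priori $R$ could be a product of several fields, where the units are not additively closed, and one must use $|U| \geq 2$ together with the DCCL to rule this out cleanly. The cleanest argument is probably: by the DCCL, $R/J(R)$ is semisimple, a product of matrix rings over division rings; the additive-closure hypothesis on units pulls back to a strong constraint on this quotient, forcing a single field factor, and then $J(R) \subseteq R \setminus U$ combined with additive closure forces $J(R) = 0$. Making that pullback argument precise — handling matrix factors and noncommutative division ring factors and showing they violate additive closure of units once $|U| \geq 2$ — is where I would spend the most care.
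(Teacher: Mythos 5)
Your overall skeleton ($a\Rightarrow b$, $a\Rightarrow c$ easy; the content in $b\Rightarrow a$ and $c\Rightarrow a$) is workable, and your opening move for $c\Rightarrow a$ is essentially the paper's: fixedpointfreeness of $\psi_u$ makes $\psi_{u-1}=\psi_u-\mathrm{id}$ injective, and DCCL (via Lemma \ref{Lemma0}) then makes $u-1$ a unit or zero, so differences of units are units or zero and $U_0$ is additively closed. But note two things: the paper never proves $b\Rightarrow a$ at all --- it proves $b\Rightarrow c$ in two lines ($ru=r$ gives $r(1-u)=0$ with $1-u$ a unit, so $r=0$) and then does only one hard implication, $c\Rightarrow a$. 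By choosing the cycle $b\Rightarrow a$, $c\Rightarrow a$ you have doubled the hard work and, worse, both of your hard directions funnel into the single claim ``$U_0$ an additive subgroup forces $R=U_0$,'' which is exactly where your argument has a genuine gap.

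The gap is this: you assert that additive closure of $U_0$ forces the set of non-units to be an additive subgroup (equivalently, that $R$ is local), and then finish by the fact that a group is not a union of two proper subgroups. But the complement of a subgroup is not a subgroup, and no formal argument is given that two non-units cannot sum to a unit; this is precisely the nontrivial content of the theorem, not a consequence of the hypothesis. (Your parenthetical ``$u$ is a unit iff $u\notin J(R)$'' is also false outside the local case --- consider $(1,0)$ in $\mathbb{Z}_2\times\mathbb{Z}_2$.) The paper closes this hole differently: from ``distinct units have unit differences'' it first shows $R$ is reduced (if $a^n=0$ then $1-a$ is a unit $u_1$, and $1-u_1=a$ must be a unit or $u_1=1$, forcing $a=0$); then it invokes the structure theorem for reduced rings with DCCL (\cite[Theorem 9.41]{Pilz1}) to write $R$ as a finite direct product of $k$ fields; and finally it rules out $k\geq 2$ by exhibiting, from any non-identity unit, a unit $w$ fixing the nonzero idempotent $e_2$, contradicting fixedpointfreeness. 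Your fallback sketch via $R/J(R)$ semisimple points in the same direction, but the steps you defer --- killing $J(R)$, excluding matrix factors and multiple field factors using $|U|\geq 2$ --- are the actual proof, so as written the proposal does not establish $b\Rightarrow a$ or complete $c\Rightarrow a$.
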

\begin{proof} 
$a \Rightarrow b:$ This is clear from the definition of an a-near-ring and the fact that any field satisfies the DCCL.

$b \Rightarrow c:$ Let $u$ be a non-identity unit in $R$. Suppose that $ru=r$ for some $r \in R$. Thus, $r(1-u)=0$. By assumption, $1-u \in U$ and consequently, $r=0$. This shows that $R$ is an f-near-ring.

$c \Rightarrow a:$  Let $a, b \in U$ and suppose $a-b \not \in U$. Since $R$ has DCCL and $a-b$ is not a unit, $\psi_{a-b}$ cannot be an injective map by Lemma \ref{Lemma0}. Thus, there is an element $z \in R\setminus\{0\}$ such that $0=z(a-b)=za-zb$. Consequently, $za=zb$ and therefore, $z=zba^{-1}$. So, $z$ is a fixed point. Since $ba^{-1} \in U$, by assumption, $ba^{-1}=1$ and $a=b$. Therefore, given $a, b\in U$ such that $a \neq b$, $a-b \in U$. 

Suppose there is a nilpotent element $a \in R$, so $a^{n}=0$ for some non-zero integer $n$. Then, $(1-a)(1+a+\ldots +a^{n-1})=1$, so $1-a$ is a unit, $u_1$ say. Consequently, $1-u_1=a \not \in U$. Since $1 \neq u_1$ implies $1-u_1 \in U$, as just shown, $1=u_1$ and $a=0$. This shows that $R$ does not contain non-zero nilpotent elements, so it is a reduced ring.

Thus, by \cite[Theorem 9.41]{Pilz1} $R$ is a finite direct product of $k$ fields, say. Suppose that $k \geq 2$. Then there exist orthogonal idempotents being the identities of the fields in the direct product such that $1=e_1+ \ldots + e_k$. By assumption, there exists a non-identity unit $u=f_1 + \ldots + f_k \in R$. W.l.o.g.\ we assume that $f_1 \neq e_1$. Then $w=f_1+ \ldots + e_k$ is also a unit which does not equal the identity. Then, $e_2w=e_2(f_1+ \ldots + e_k)=e_2$ and therefore, $e_2$ is a fixed point of $\psi_{w}$ which contradicts the assumption. Thus, $k=1$ and $R$ is a field.
\end{proof}

What happens in rings with identity and with DCCL which have the identity element as their single unit element answers the next proposition.

\begin{prop}\label{Ring1} Let $R$ be a ring with identity, which is the only unit element in $R$, $R$ satisfying the DCCL. Then, $R$ is isomorphic to a direct product of copies of the field $\Bbb{Z}_2$.
\end{prop}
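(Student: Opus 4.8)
The plan is to reuse, in simplified form, the reduced-ring argument already appearing in the proof of Theorem~\ref{Lem1}, specialised to the extreme case $U=\{1\}$ in which the fixed-point bookkeeping becomes unnecessary. First I would show that $R$ is reduced: if $a\in R$ is nilpotent, say $a^{n}=0$, then $(1-a)(1+a+\cdots+a^{n-1})=1$ exhibits $1-a$ as a unit, so $1-a=1$ and hence $a=0$. Thus $R$ has no nonzero nilpotent elements.

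Next, exactly as in the proof of Theorem~\ref{Lem1}, since $R$ is reduced and satisfies the DCCL, \cite[Theorem 9.41]{Pilz1} yields a ring isomorphism $R\cong F_1\times\cdots\times F_k$ with each $F_i$ a field. It then remains to identify the factors. The group of units of a finite direct product of rings with identity is the direct product of the groups of units, so the hypothesis that $R$ has trivial unit group forces each $F_i^{\times}$ to be trivial; a field with trivial multiplicative group has exactly two elements, so $F_i\cong\mathbb{Z}_2$ for every $i$. Therefore $R\cong\mathbb{Z}_2^{k}$, which is a direct product of copies of $\mathbb{Z}_2$.

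I do not anticipate a genuine obstacle: once the structure theorem \cite[Theorem 9.41]{Pilz1} is invoked, the conclusion is read off by computing unit groups of products. I would only note two points of care. Unlike in Theorem~\ref{Lem1}, there is here no mechanism forcing $k=1$: a direct product of several copies of $\mathbb{Z}_2$ still has $1$ as its only unit, so all values of $k$ are genuinely possible and consistent with the statement. Second, the degenerate ring $R=\{0\}$ (where $1=0$) should either be excluded by the convention $1\neq 0$ or regarded as the empty product of copies of $\mathbb{Z}_2$.
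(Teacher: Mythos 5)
Your proposal is correct and follows the paper's own proof essentially verbatim: the same nilpotency argument shows $R$ is reduced, the same appeal to \cite[Theorem 9.41]{Pilz1} gives the decomposition into fields, and the same unit-group computation identifies each factor as $\Bbb{Z}_2$. The extra remarks about $k$ being arbitrary and the degenerate case are harmless clarifications.
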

\begin{proof} Suppose $R$ contains a nilpotent element $a$, say. Then, $(1-a)(1+a+ \ldots + a^{n-1})=1$ for some natural number $n>1$. Hence, $1-a$ is a unit and by assumption, $1-a=1$. Thus, $a=0$ which shows that $R$ is a reduced ring. By \cite[Theorem 9.41]{Pilz1} a reduced ring with DCCL is isomorphic to a direct product of fields. Since we have only $1$ as unit element in $R$, any field in the direct composition has only one unit element which shows that $R$ is isomorphic to a direct product of copies of ${\mathbb Z}_2$.
\end{proof}

The situation in near-rings with identity having precisely the identity as unit element is more complex and a complete solution of how these type of near-rings look like is not known to the authors. We will look at some concrete examples using  Theorem \ref{Tim1}.

Theorem \ref{Lem1} tells us that within the class of rings with DCCL there do not exist a-rings or f-rings other than fields. 
We have to look at proper near-rings in order to get nontrivial examples, which we do in the following section.

\section{Constructions  using semigroups of group endomorphisms}\label{NVSsec}
In this section we present a  method of constructing all a- and f-near-rings.

\begin{thm}\label{A1} The following are equivalent:
\begin{enumerate}
\item $(N,+,\cdot)$ is an  f-near-ring.
\item There exists \begin{enumerate} \item a group $(M,+)$. \item a semigroup $S= G \cup E$ of group endomorphisms of $(M,+)$ where $G$ is a group of fpf automorphisms of $(M,+)$ and $E$ is a semigroup of non-bijective endomorphisms containing the zero map $\overline{0}$.
\item an element $m \in M$ such that $S(m)=M$ and for all $n \in M$ there is a unique $s_n \in S$ with $n=s_n(m)$.

\end{enumerate} such that $(N,+,\cdot) \cong (M,+,*)$, where $a*b=s_b(a)$ ($a, b \in M$).
\end{enumerate}
\end{thm}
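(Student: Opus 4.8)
The plan is to prove the two implications separately, building the dictionary between an abstract f-near-ring and the endomorphism-semigroup data explicitly via right translations.

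First I would prove $(1)\Rightarrow(2)$. Given an f-near-ring $(N,+,\cdot)$, set $(M,+):=(N,+)$ and $m:=1$, the multiplicative identity. The candidate semigroup is $S:=\Psi_N=\{\psi_n\mid n\in N\}$, which is closed under composition since $\psi_a\circ\psi_b=\psi_{ba}$ (here I use $\psi_b(\psi_a(x))=\psi_a(x)\cdot b = (x\cdot a)\cdot b = x\cdot(a\cdot b)$, so the correct order must be checked carefully and recorded). Each $\psi_n$ is an endomorphism of $(M,+)$ by right distributivity. I would then split $N=U\cup(N\setminus U)$: by the discussion in the Background section, $\psi_n$ is an automorphism iff $n\in U$, so put $G:=\Psi_U$ and $E:=\Psi_{N\setminus U}$. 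Then $G$ is a group of automorphisms of $(M,+)$, and it is fpf precisely because $N$ is an f-near-ring. The set $E$ consists of non-bijective endomorphisms; it contains $\overline 0=\psi_0$ since $N$ is zero-symmetric; and $E$ is closed under composition and absorbs $G$ on both sides, because the complement of a group of units in a monoid is a (two-sided) ideal of the multiplicative semigroup — so $S=G\cup E$ is a semigroup with the stated structure. Finally the map $n\mapsto\psi_n$ must be shown to be a bijection $N\to S$: surjectivity is the definition of $S$, and injectivity is exactly the statement $S(m)=M$ with uniqueness, since $\psi_n(1)=1\cdot n=n$, so $\psi_n$ is determined by $n$ and conversely $n$ is recovered as $\psi_n(m)$. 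This simultaneously verifies condition (c): $S(1)=\{\psi_n(1)\mid n\in N\}=\{n\mid n\in N\}=N$, and the element $s_n$ with $s_n(1)=n$ is unique, namely $\psi_n$. The isomorphism $(N,+,\cdot)\cong(M,+,*)$ with $a*b=s_b(a)$ is then the identity map on the underlying set, since $s_b=\psi_b$ gives $a*b=\psi_b(a)=a\cdot b$.

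For $(2)\Rightarrow(1)$, start from the data $(M,+)$, $S=G\cup E$, and $m$, and define $a*b:=s_b(a)$, where $s_b$ is the unique element of $S$ with $s_b(m)=b$. Right distributivity $(a+b)*c=s_c(a+b)=s_c(a)+s_c(b)=a*c+b*c$ is immediate since $s_c$ is an endomorphism. Associativity requires showing $s_{s_c(b)}=s_b\circ s_c$ (or the reverse composition, to be pinned down against the convention fixed above): both are elements of $S$, and evaluating at $m$ gives $s_{s_c(b)}(m)=s_c(b)=s_c(s_b(m))=(s_b\circ s_c)(m)$ — wait, I need $s_c\circ s_b$ here — so by the uniqueness clause in (c) the two endomorphisms coincide, hence $(a*b)*c=s_c(s_b(a))=s_{s_b\circ?}(a)\cdots$; the bookkeeping of which composite equals $s_{a*b... }$ is the one genuinely fiddly point and I would write it out once with the conventions nailed down. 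The identity element is $m$: $a*m=s_m(a)$ and $s_m$ is the unique element of $S$ fixing... no — $s_m$ is the unique $s$ with $s(m)=m$; both the identity automorphism $\mathrm{id}_M\in G$ and the element $s_m$ satisfy this, so $s_m=\mathrm{id}_M$ and $a*m=a$; and $m*a=s_a(m)=a$. Zero-symmetry: $0*a=s_a(0)=0$ since $s_a$ is an endomorphism, and $a*0=s_0(a)=\overline 0(a)=0$ since $s_0=\overline 0$ (the unique $s$ with $s(m)=0$, which must be $\overline 0\in E$ as $G$ acts without fixed points other than $0$ so no automorphism sends $m\neq 0$ to $0$ unless $m=0$; the edge case $m=0$ forces $M=S(0)=\{0\}$, a degenerate near-ring with $U=\{1\}=\{0\}$ which one should either allow or exclude by a remark). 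Finally, identify the units: $b$ is a unit iff $\psi_b^{(N)}$ is... more directly, $b\in U$ iff $s_b$ is an automorphism iff $s_b\in G$; the left translation picture then shows $\Psi_U$ corresponds to $G$ acting on $M$, which is fpf by hypothesis, so $(N,+,*)$ is an f-near-ring.

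The main obstacle I anticipate is not conceptual but notational: getting the composition order in $\psi_a\circ\psi_b=\psi_{ba}$ consistent throughout, and correspondingly which of $s_b\circ s_c$, $s_c\circ s_b$ equals $s_{a*b}$-type expressions, so that associativity genuinely falls out of the uniqueness clause (c) rather than from a hidden extra hypothesis. A secondary subtlety is confirming that the non-units $N\setminus U$ form a multiplicative ideal (needed for $S$ to be a semigroup with $E$ a subsemigroup absorbing $G$) — this uses that a one-sided inverse in a DCCN near-ring is two-sided, i.e. Lemma~\ref{Lemma0}, although for this theorem no chain condition is assumed, so I would instead argue directly: if $a\notin U$ then $\psi_a$ is not injective or not surjective, and composing a non-injective (resp. non-surjective) endomorphism with any endomorphism on the appropriate side stays non-injective (resp. non-surjective), hence $ab,ba\notin U$. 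One should double-check whether "non-bijective" in the statement should be "non-surjective" or "non-injective" — for finite $M$ these coincide, but in general the right reading is "not an automorphism", and I would make that explicit.
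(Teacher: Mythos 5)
Your proposal follows essentially the same route as the paper's own proof: right translations $\psi_n$ with $m=1$, $G=\Psi_U$, $E=\Psi_{N\setminus U}$ for $(1)\Rightarrow(2)$, and for the converse the verification of the near-ring axioms via the uniqueness clause in (c), with the units identified as $G(m)$. The associativity bookkeeping you flag as ``fiddly'' is resolved exactly as you suspect: $s_{s_c(b)}$ and $s_c\circ s_b$ both lie in $S$ and agree at $m$, hence coincide by uniqueness; and your identification of the units via ``$b\in U$ iff $s_b$ is an automorphism'' is a legitimate shortcut past the paper's longer case analysis on whether $s_u\in E$.

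One caveat: your justification that $E$ is a semigroup absorbing $G$ leans on the claim that the non-units of a monoid always form a two-sided ideal, which is false in general (e.g.\ the bicyclic monoid, where a product of two non-units is the identity); your fallback injective/non-surjective argument also covers only some of the cases when $M$ is infinite. For finite $M$, where non-bijective equals non-injective, your direct argument does close the gap, and the paper itself asserts this closure without proof, so this is a shared soft spot rather than a defect peculiar to your write-up --- but you should not cite the monoid-ideal ``fact'' as stated.
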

\begin{proof} $(1) \Rightarrow (2)$: Let $(M,+)=(N,+)$. 
For all $r \in N$, $\psi_r :N \longrightarrow N, n \mapsto nr$ is an endomorphism of $(N,+)$. Let $S:=\{\psi_r \mid r \in N\}$, then $S$ is a semigroup of group endomorphisms. $G:=\Psi_U=\{\psi_u \mid u \in U\}$ is a subgroup of $S$ acting without fixedpoints on $(N,+)$. Let $E:=\{\psi_z \mid z \in N \setminus U\}$. Then $S = G \cup E$ and no element in $E$ is a bijection (certainly, $\overline {0} \in E$). Let $m:=1$. For each $n \in N$ we have $n=1*n=\psi_n(1)$ with $\psi_n \in S$. Certainly, $\psi_n$ is the unique element $s$ in $S$ such that $s(1)=n$. $S(1)=N$ and for each $n \in N$ there is a unique $s_n \in S$ such that $n = s_n(1)$, namely $s_n=\psi_n$. Note that for $a, b \in N$ we have $a \cdot b =\psi_b(a)=s_b(a)=a*b$.

$(2) \Rightarrow (1)$: We first show that $*$ is indeed a near-ring multiplication. By assumption, we know that for a given element $b \in M$ there is a unique element $s_b \in S$ such that $b=s_b(m)$, so multiplication is well defined. We first prove associativity of $*$: On one hand, $(a*b)*c=s_b(a)*c=s_c(s_b(a))$. Since $b=s_b(m)$ and $c=s_c(m)$ we have $a*(b*c)=a*s_c(b)=a*s_c(s_b(m))=s_c(s_b(a))$.

Using that $s_c$ is a group endomorphism for given $c \in M$ we get $(a+b)*c=s_c(a+b)=s_c(a)+s_c(b)=a*c+b*c$. This shows that $*$ is a right distributive multiplication and $(M,+,*)$ is a near-ring.

As a group $G$ contains the identity function $id$, we have  $m=id(m)$ and so we have $a*m=a$. On the other hand, $m*a=s_a(m)=a$ which proves that $m$ is the identity element w.r.t $*$. Since $\overline{0} \in E$ ($\overline{0}$ being the zero map) we have $0 = \overline{0}(m)$ and consequently, $n*0=\overline{0}(n)=n$ for all $n \in M$ and hence, $M$ is a zero symmetric near-ring.

Let $U$ be the group of units of $M$. We now prove $U=G(m)$:

$G(m) \subseteq U$, because for all elements $g(m) \in G(m)$ we have that $g(m)*g^{-1}(m)=g^{-1}g(m)=m$, $m$ being the identity element, so $g^{-1}(m)$ is the inverse w.r.t $*$. 

Let $u \in U$, so there is an element $u^{-1} \in M$ such that $u^{-1}*u=m$. 
We know that there is a unique element $s_u \in S$ such that $s_u(m)=u$ and also $s_{u^{-1}}(m)=u^{-1}$. Therefore, $u^{-1}*u=s_u(s_{u^{-1}}(m))=m$. Since $S$ is a semigroup, we have $s_u \circ s_{u^{-1}} \in S$. So, we have $m=id(m)=s_us_{u^{-1}}(m)$ and this implies $s_u\circ s_{u^{-1}}=id$ because there is only one endomorphism in $S$ mapping $m$ to $m$, by assumption. Suppose $s_u \in E$. Since $id \in G$, $s_{u^{-1}} \not \in E$ and consequently, $s_{u^{-1}} \in G$. But then $s_u=s_{u^{-1}}^{-1}\in G$, a contradiction. This shows that $s_u \in G$. This finally gives us $U \subseteq G(m)$.

To finish the proof we have to show that $\Psi_U$ acts without fixedpoints on $(M,+)$. Let $m \neq b=s_b(m)$ where $s_b \in G\setminus \{id\}$. Suppose there is an element $a \in M$ such that $a*b=a$. Then we have that $s_b(a)=a$. Since each non-identity automorphism in $G$ acts without fixedpoints on $M$, we must conclude $a=0$.
\end{proof}

Theorem \ref{A1} will now be used to give an example of an f-near-ring which is not an a-near-ring. More examples for constructing f-near-rings will follow.

\begin{example}\label{UnitEx1}
Let $(M,+):=(\Bbb{Z}_3^{4},+)$ be the $4$-dimensional direct sum of the group $({\mathbb Z}_3,+)$ written as column vectors. 
In the following, the matrices and vectors will be over the field ${\mathbb Z}_3$. 
Let $m:=(0,0,0,1)^t$. 

Let $\Gamma = \{(0,0,0,1)^t, (0,0,0,2)^t, (0,0,1,0)^t, (0,0,2,0)^t\} \subseteq M$, then define

\begin{align*}
 G&:=\left<\left(\begin{array}{cccc}
0&-1& 0 & 0\\ 1 & 0 & 0 & 0\\ 0 & 0 & 0 & -1 \\ 0 & 0 & 1 & 0
\end{array}\right)\right> \leq GL(3,4)
\\
E&:= \{\left(\begin{array}{cccc}
0&0& n_2 & n_1\\ 0 & 0 & -n_1 & n_2\\ 0 & 0 & n_4 & n_3 \\ 0 & 0 & -n_3 & n_4
\end{array}\right) |  (n_1, n_2, n_3, n_4)^t  \in M \setminus \Gamma\}
\end{align*}

One checks that $G$ acts without fixed points  on $M$ and is a subgroup of order $4$ of the general linear group $GL(3,4)$. By straightforward calculation we see that $E$ is a matrix semigroup. 
Also, one checks that for given matrix $A \in G$ and $B \in E$, the product $AB$ and $BA$ of the matrices is contained in $E$. Let $S:= G \cup E$. 
It follows that $S$ is a matrix semigroup. 
A matrix $s \in S$ induces an endomorphism on $(M,+)$ simply by matrix multiplication. 
Thus (without distinguishing between the matrix and the induced endomorphism) $S$ is a semigroup of group endomorphisms of $M$. 
Also, we see that $G \cdot m = \Gamma$ 
and $E\cdot m = M\setminus \Gamma$. 
Thus, for each element  $n \in M$, there is a unique matrix $s \in S$ such that $n=s \cdot m$.

We now define the multiplication $*$ on $M$ as $a*b=a*(s\cdot m):=s \cdot a$, $(a, b \in M)$.

By Theorem \ref{A1}, $(M,+,*)$ is an f-near-ring. The units $U$ of this near-ring are $U:=\Gamma$. We clearly have that $U_0$ is not additively closed.
\end{example}


A similar structure exists for a-near-rings.

\begin{thm}\label{A2} The following are equivalent:
\begin{enumerate}
\item $(N,+,\cdot)$ is an  a-near-ring.
\item There exists \begin{enumerate} \item a group $(M,+)$ with a subgroup $H \subseteq M$. 
\item a semigroup $S=G \cup E$ of group endomorphisms of $(M,+)$ where $G$ is a group of automorphisms and $E$ is a semigroup of non-bijective endomorphisms containing the zero map $\overline{0}$.
\item an element $m \in M$ such that $S(m)=M$, $G(m)=H \setminus \{0\}$ and for all $n \in M$ there is a unique $s_n \in S$ with $n=s_n(m)$.
\end{enumerate} such that $(N,+,\cdot) \cong (M,+,*)$, where $a*b=s_b(a)$ ($a, b \in M$).
\end{enumerate}
\end{thm}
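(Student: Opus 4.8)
The plan is to mirror the proof of Theorem~\ref{A1} almost line by line. The only structural differences are that here $G$ is required only to be a group of automorphisms (not a \emph{fixedpointfree} one) and that the extra datum $G(m)=H\setminus\{0\}$ with $H\le(M,+)$ takes the place of fixedpointfreeness. Everything in Theorem~\ref{A1} that shows $*$ is a zero-symmetric near-ring multiplication with identity $m$, and that identifies the group of units of $(M,+,*)$ as $G(m)$, uses only that $G$ is a group disjoint from $E$ and that the representation $n=s_n(m)$ is unique; it never uses fixedpointfreeness, so it transfers verbatim. The genuinely new content is the dictionary between ``$U_0$ is an additive subgroup of $(N,+)$'' and ``$G(m)\cup\{0\}$ is a subgroup of $(M,+)$''.

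For $(1)\Rightarrow(2)$, put $(M,+):=(N,+)$, $m:=1$, $S:=\{\psi_r\mid r\in N\}$, $G:=\Psi_U$, $E:=\{\psi_z\mid z\in N\setminus U\}$, and --- the new ingredient --- $H:=U_0=U\cup\{0\}$. Exactly as in Theorem~\ref{A1}: $S$ is a semigroup of endomorphisms of $(M,+)$, $G$ is a subgroup consisting of automorphisms, $E$ is the disjoint complement of $G$ in $S$, consists of non-bijective endomorphisms (a bijective $\psi_z$ would force $z\in U$), and contains $\overline{0}=\psi_0$; the map $r\mapsto\psi_r$ is injective, so $s_n:=\psi_n$ is the unique $s\in S$ with $s(m)=n$ (indeed $\psi_n(1)=1\cdot n=n$); and $a\cdot b=\psi_b(a)=s_b(a)$. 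The two new verifications are immediate: $H=U\cup\{0\}$ is a subgroup of $(M,+)$ precisely because $N$ is an a-near-ring, and $G(m)=\{\psi_u(1)\mid u\in U\}=\{u\mid u\in U\}=U=H\setminus\{0\}$ since $0\notin U$. This yields all of~(2).

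For $(2)\Rightarrow(1)$, the verification that $(M,+,*)$ is a zero-symmetric near-ring with identity $m$ --- well-definedness of $*$, associativity, right distributivity because each $s_c$ is a group endomorphism, $a*m=a=m*a$ using $id\in G$, and $n*0=\overline{0}(n)=0$ using $\overline{0}\in E$ --- is word for word that of Theorem~\ref{A1}, as is the proof that the group of units $U$ of $(M,+,*)$ equals $G(m)$: for $g\in G$ one gets $g(m)*g^{-1}(m)=g^{-1}(g(m))=m=g^{-1}(m)*g(m)$ by uniqueness of the $s_n$, so $G(m)\subseteq U$; and for a unit $u$ the relation $s_u\circ s_{u^{-1}}=id$ forces $s_u\in G$ (if $s_u\in E$ then $s_{u^{-1}}\notin E$, hence $s_{u^{-1}}\in G$, hence $s_u=s_{u^{-1}}^{-1}\in G$, a contradiction), so $u=s_u(m)\in G(m)$. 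It remains to see that $M$ is an a-near-ring. By hypothesis $G(m)=H\setminus\{0\}$, so $U=G(m)=H\setminus\{0\}$ and hence $U_0=U\cup\{0\}=H$, which is a subgroup of $(M,+)$ by hypothesis; moreover $U_0$ is closed under $*$ (a product of units is a unit, and $x*0=0*x=0$ by zero-symmetry) and its nonzero elements $U$ form a multiplicative group. Since $(U,*)$ is a group, additive closure of $U_0$ is equivalent to $U_0$ being a subnear-field, so $U_0$ is a subnear-field of $M$ and $M$ is an a-near-ring.

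I do not expect a genuine obstacle, since the argument runs parallel to Theorem~\ref{A1}; the one point needing care is bookkeeping --- noticing that $G(m)=H\setminus\{0\}$ forces $H=U_0$ (so $H$ is determined by the near-ring rather than a free parameter), and remembering to upgrade ``$U_0$ additively closed'' to ``$U_0$ a subnear-field'' via the easy multiplicative closure of $U_0$ together with the fact that $(U,*)$ is always a group.
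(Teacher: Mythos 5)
Your proposal is correct and follows essentially the same route as the paper: reuse the Theorem~\ref{A1} argument verbatim for the near-ring structure and the identification $U=G(m)$, then use the hypothesis that $H$ is an additive subgroup to conclude $U_0=H$ is additively closed (and conversely set $H:=U_0$ in the forward direction). The paper's own proof is just a terser version of exactly this, so there is nothing to add.
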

\begin{proof}
$(1) \Rightarrow (2)$: As in the proof of Theorem \ref{A1} let $(M,+)=(N,+)$ and $S:=\{\psi_r \mid r \in N\}$. $G:=\Psi_U=\{\psi_u \mid u \in U\}$ is a subgroup of $S$ acting on $(N,+)$. Let $E:=\{\psi_z \mid z \in N \setminus U\}$. Then $S = G \cup E$ is a semigroup and no element in $E$ is a bijection (certainly, $\overline {0} \in E$). We now let $H=U_0$, so $G(1)=U$. As in the proof of $(1) \Rightarrow (2)$ of Theorem \ref{A1} we let $m=1$ and see that property (c) holds.

$(2) \Rightarrow (1)$: To prove that $(M,+,*)$ is a near-ring runs as in the proof of Theorem \ref{A1}. Also as in the proof of Theorem \ref{A1} we see that $G(m)=H\setminus \{0\}$ is  the set of units of the near-ring. 

It only remains to show that the units are additively closed. Let $a=g_1(m)$ and $b=g_2(m)$ be two units, $a \neq b$. Then we know that $a-b=g_1(m)-g_2(m) \in G(m)$ since $H$ is a subgroup of $M$. Thus, $a-b$ is a unit. 
\end{proof}

We construct an example of a proper a-near-ring.

\begin{example}\label{TG1} We let $M$ be the group which is given by the following multiplicative presentation (the finite Heisenberg group on $p=3$). \[M:=\left<x,y,z: x^{3}=y^{3}=z^{3}=1, yz=zyx, xy=yx, xz=zx\right>\]
$M$ is a non-abelian group of order $27$. Let $S:=\{s_a | a \in M\}$ where $s_a: M \rightarrow M$ is a group endomorphism defined on the generators $x,y,z$ of the group as follows. For $a \in M\setminus \{z, z^{2}\}$ we define $s_a(x):=1, s_a(y):=1, s_a(z):=a$, otherwise $s_z(x):=x, s_z(y):=y, s_z(z):=z$ and  $s_{z^{2}}(x):=x^{2}, s_{z^{2}}(y):=y, s_{z^{2}}(z):=z^{2}$. The endomorphisms $s_z$ and $s_{z^{2}}$ are automorphisms of the group, $s_z=id$ the identity. 
We now let $G=\{s_z, s_{z^{2}}\}$ and $E=\{s_a | a \in M\setminus \{z, z^{2}\}\}$. Hence, $G(z)=\{z,z^{2}\}$ and consequently, $G(z) \cup \{1\}$ is a subgroup $H$ of $M$. By taking $m=z$ we can now apply Theorem \ref{A2} and see that $(M,+,*)$ is an a-near-ring.
\end{example}
Example \ref{TG1} is $LibraryNearRingWithOne(GTW27_{-}4,1)$ in the GAP package SONATA \cite{Sonata}.

 We now use Theorems \ref{A1} and \ref{A2} to construct af-near-rings which are not near-fields such that the units are isomorphic to a given near-field.

\begin{thm}\label{Tim1}
 Let $(F,+,*)$ be a near-field and $(V,+)=(F^k,+)$ the $k$ dimensional direct sum of $(F,+)$. For $i \in \{1, \ldots ,k\}$ let $\alpha_i: F \rightarrow F$ be zero preserving maps which are multiplicative automorphisms of the group $(F\setminus \{0\},*)$ where $\alpha_k$ is the identity map.
 For $\underline a, \underline x \in V$ define the operation $*_1$ in $V$ in the following way:
 \begin{align*}
  \underline x *_1 \underline a &:= (x_1* \alpha_1(a_{k}),\ldots, x_{k-1}*\alpha_{k-1}(a_k),x_k*a_k)\mbox{ if } a_1=a_2 = \ldots=a_{k-1}=0 \\
  \underline x *_1 \underline a &:= (x_k*a_1,\ldots,x_k*a_k)\mbox{ otherwise}
 \end{align*}

Then $(V,+,*_1)$ is an  af-near-ring with units $U=\{(0,\ldots,0,a)\vert a \in F,\,a\neq 0\}$, identity $(0,\ldots,0,1)$, 
 and $(U\cup \{0\},+,*_1)$ isomorphic to $(F,+,*)$.
\end{thm}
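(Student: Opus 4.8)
The plan is to verify the hypotheses of Theorems \ref{A1} and \ref{A2} directly, with $(M,+)=(V,+)$ and $m=(0,\dots,0,1)$. For each $\underline a\in V$ let $s_{\underline a}\colon V\to V$ be the map $\underline x\mapsto \underline x *_1\underline a$, written componentwise according to the two clauses in the definition of $*_1$. Since each component of $s_{\underline a}(\underline x)$ has the form $x_i*c$ for a fixed $c\in F$ (namely $c=\alpha_i(a_k)$, $c=a_k$, or $c=a_i$, with $x_k$ in place of $x_i$ in the second clause), the right distributive law in $F$ shows that every $s_{\underline a}$ is an endomorphism of $(V,+)$. A short computation using $0*b=0$, $\alpha_i(0)=0$ and $\alpha_i(1)=1$ gives $s_{\underline a}(m)=\underline a$ for all $\underline a$; hence, with $S:=\{s_{\underline a}\mid \underline a\in V\}$, we have $S(m)=V$, distinct $\underline a$ yield distinct $s_{\underline a}$, and the unique element of $S$ carrying $m$ to a given $\underline n$ is $s_{\underline n}$.

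Next I would set $G:=\{s_{(0,\dots,0,a)}\mid a\in F\setminus\{0\}\}$, $E:=S\setminus G$, and $H:=\{(0,\dots,0,a)\mid a\in F\}$, which is a subgroup of $(V,+)$ with $G(m)=H\setminus\{0\}$. For $a\neq 0$ each coordinate map $x\mapsto x*a$ and $x\mapsto x*\alpha_i(a)$ is a bijection of $F$ (right multiplication by a nonzero element in a near-field, using that $\alpha_i$ restricts to a bijection of $F\setminus\{0\}$), so $s_{(0,\dots,0,a)}$ is an automorphism of $(V,+)$; using associativity of $*$ in $F$ and $\alpha_i(a)*\alpha_i(b)=\alpha_i(a*b)$ one checks $s_{(0,\dots,0,a)}\circ s_{(0,\dots,0,b)}=s_{(0,\dots,0,\,b*a)}$, $s_{(0,\dots,0,1)}=\mathrm{id}_V$ and $s_{(0,\dots,0,a)}^{-1}=s_{(0,\dots,0,a^{-1})}$, so $G$ is a group of automorphisms. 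It acts fixed point freely: if $s_{(0,\dots,0,a)}$ with $a\neq 1$ fixes $\underline x$, then $x_k*a=x_k$ and $x_i*\alpha_i(a)=x_i$ for $i<k$, and left-cancelling $x_j$ in $F$ whenever $x_j\neq 0$ forces $a=1$ (for $j=k$) or $\alpha_j(a)=1=\alpha_j(1)$ hence $a=1$ (for $j<k$), so $\underline x=\underline 0$. Finally $\overline 0=s_{\underline 0}\in E$, and every element of $E$ is non-bijective: $s_{\underline 0}=\overline 0$, and if some $a_i\neq 0$ with $i<k$ then $s_{\underline a}$ factors through projection onto the $k$-th coordinate, so its image has size at most $|F|<|V|$.

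The one point needing real care is that $E$ is a subsemigroup, equivalently that $S$ is closed under composition and a composite lies in $G$ only when both factors do. Here I would run through the cases according to which clause of $*_1$ applies to $\underline a$ and to $\underline b$; in each case $s_{\underline a}\circ s_{\underline b}=s_{\underline c}$ with $\underline c=\underline b*_1\underline a$ written out explicitly (using associativity of $*$, multiplicativity and zero-preservation of the $\alpha_i$), and one sees that $\underline c$ has the form $(0,\dots,0,c_k)$ with $c_k\neq 0$ precisely when both $\underline a$ and $\underline b$ do — the remaining composites land on $\overline 0$ or on some $s_{\underline c}$ with a nonzero coordinate among the first $k-1$, because $F$ has no zero divisors. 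With this, $S=G\cup E$, $m$ and $H$ satisfy all hypotheses of Theorems \ref{A1} and \ref{A2}; since $\underline a*_1\underline b=s_{\underline b}(\underline a)$ by construction, the near-ring those theorems produce is $(V,+,*_1)$ itself, which is therefore an af-near-ring with identity $m=(0,\dots,0,1)$ and group of units $G(m)=\{(0,\dots,0,a)\mid a\neq 0\}=U$. Lastly, $a\mapsto(0,\dots,0,a)$ is an additive bijection $F\to U\cup\{0\}$, and since the first clause of $*_1$ applies with vanishing first $k-1$ coordinates we get $(0,\dots,0,a)*_1(0,\dots,0,b)=(0,\dots,0,a*b)$, so this map is an isomorphism $(F,+,*)\cong(U\cup\{0\},+,*_1)$. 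The main obstacle is the bookkeeping in the closure argument for $E$; the rest is routine verification.
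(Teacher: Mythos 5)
Your proposal is correct and follows essentially the same route as the paper: both define $s_{\underline a}(\underline x)=\underline x*_1\underline a$, verify closure of $S$ under composition by the same four-case analysis on membership in $Z=\{\underline a\mid a_1=\dots=a_{k-1}=0\}$, identify $G$, $E$ and the fpf action, and then invoke Theorems \ref{A1} and \ref{A2}. The only cosmetic differences are that you make explicit the unifying identity $s_{\underline a}\circ s_{\underline b}=s_{\underline b*_1\underline a}$ and the final isomorphism $(U\cup\{0\},+,*_1)\cong(F,+,*)$, which the paper leaves implicit.
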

\begin{proof}
Let $s:V\rightarrow End(V)$ be defined by $s(\underline a) (\underline x) := \underline x *_1 \underline a$.
By right distributivity of the near-field multiplication, $s(\underline a)$ is a homomorphism.

We first show that $S = \{s(v)\vert v\in V\}$ is closed under composition of functions and is a semigroup.  
For the rest of this proof, we write $a*b$ as $ab$ to aid readability. 
We now show $S$ is closed with 4 cases depending upon the structure of $a$ and $b$.
Let $Z = \{\underline a \in V \vert a_1=\ldots=a_{k-1}=0\}$.

Case 1: Let $\underline a, \underline b \in V\setminus Z$. 
Then
\begin{align*}
 (s(\underline a)\circ s(\underline b))  (\underline x) &= s(\underline a)(s(\underline b(\underline x)) \\
   &= s(\underline a)(x_kb_1,\ldots, x_kb_k)\\
   &=(x_kb_ka_1,\ldots, x_kb_ka_k)
\end{align*} 
If $b_k=0$ this is the zero map $s((0,\ldots,0))$, otherwise there is some nonzero  $b_ka_1,\ldots, b_ka_{k-1}$. 
Thus, $(x_kb_ka_1,\ldots, x_kb_ka_k)=s((b_ka_1,\ldots,b_ka_{k}))(x_1, \ldots, x_{k})$, so $S$ is closed.

Case 2: Now consider $\underline a, \underline b \in Z$. 
We get  
\begin{align*}
 (s(\underline a) \circ s(\underline b)) (\underline x ) = s(\underline a)(x_1\alpha_1(b_k), \ldots, x_kb_k)=(x_1\alpha_1(b_k)\alpha_1(a_k), \ldots, x_kb_ka_k)
\end{align*} 
From the fact that $\alpha_i$,$i \in \{1, \ldots, k\}$ are multiplicative homomorphisms we have that  
\begin{align*}
 (x_1\alpha_1(b_k)\alpha_1(a_k), \ldots, x_kb_ka_k)
    &=(x_1\alpha_1(b_k a_k), \ldots, (x_{k-1}\alpha_{k-1}(b_k a_k),x_kb_ka_k)
 \\ &=s((0, \ldots,0, b_ka_k))(\underline x)
\end{align*}
 so $S$ is closed.

Case 3: Let $\underline a \in V\setminus Z$, $\underline b \in Z$.  
We calculate:  
\begin{align*}
 (s(\underline a)\circ s(\underline b))  (\underline x) &=s(\underline a)(x_1\alpha_1(b_k), \ldots, x_kb_k) \\ &=(x_kb_ka_1, \ldots, x_kb_ka_k)
\end{align*}
If $b_k=0$, this is the zero map $s((0,\ldots,0))$. In case $b_k \neq 0$ there is some nonzero  $b_ka_1,\ldots, b_ka_{k-1}$,
so we get $(x_kb_ka_1, \ldots, x_kb_ka_k)=s((b_ka_1,\ldots,b_ka_{k}))(\underline x)$, so $S$ is closed.

Case 4: Let $\underline a \in Z$, $\underline b \in V\setminus Z$. 
This gives:
\begin{align*}
 (s(0,\ldots,0, a_{k})\circ s(\underline b))  (\underline x) &=
   s(0,\ldots,0,a_{k})(x_kb_1, \ldots, x_kb_k) \\ &=(x_kb_1\alpha_1(a_k), \ldots, x_kb_ka_k)
\end{align*}
If $a_k=0$ this is the zero map, since the $\alpha_i$,$i \in \{1, \ldots, k\}$ are zero preserving. 
If $a_k \neq 0$ we use that the maps  $\alpha_i$,$i \in \{1, \ldots, k\}$ are automorphisms to see that there is some nonzero  $b_1\alpha_1(a_k),\ldots, b_{k-1}\alpha_{k-1}(a_k)$. 
So, we get $(x_kb_1\alpha_1(a_k), \ldots, x_kb_ka_k)=s((b_1\alpha_1(a_k), \ldots, b_ka_k))(\underline x)$, so $S$ is also closed. 

Thus we see  that $(S, \circ)$ is a semigroup.

For all $a\in F\setminus \{0\}$, $s((0,\ldots,0,a))$ is a bijection, thus an automorphism. 
This gives us our group $G := \{s((0,\ldots,0,a))\vert a\in F^*\}$.
Since $F$ is a near-field, $s((0,\ldots,0,a))$ is fpf or $a=1$.
For $\underline a \in V\setminus Z$, $s(\underline a)$ is not surjective, thus a proper endomorphism.

Also $s((0,\ldots,0))(\underline x) = (0,\ldots,0)$ is the zero map.

For all $\underline x \in V$, $s(\underline x)(0,\ldots,0,1) = \underline x$.
Thus $S = \{s(\underline v)\vert \underline v\in V\}$ satisfies the requirements of Theorem \ref{A1}, so we have an f-near-ring.
Noting that $G(m) = 0\times\ldots\times 0 \times F \leq V$ we have the requirements for Theorem \ref{A2} so we have an a-near-ring and thus an af-near-ring.
\end{proof}

The multiplication $*_1$ of Theorem \ref{Tim1} is similar to the action of near-vector spaces, see \cite{Howell} for details. 
We do not follow this line of discussion and possible links to the near-vector space construction here.

Theorem \ref{Tim1} can be efficiently used to construct examples of af-near-rings. 
Take a field $F$, $k = 2$ and $\alpha_1 =id$. 
Then $(V,+,*_1)$ as in Theorem \ref{Tim1} is an af-near-ring which is not a near-field and also not a ring. 
We might take $\alpha_1$ to be the Frobenius automorphism of the field F and we get another example of af-near-rings not being near-fields. 
We might also take $k=3$, $\alpha_1=id$ and $\alpha_2$ to be the Frobenius automorphism to get examples of af-near-rings of higher order. Also $\alpha_1=\alpha_2$ can be taken. In this manner we see that we can construct af-near-rings of all possible (finite) orders (that a finite f-near-ring must have an order of a  power of a prime follows from the next section). 

We can similarly take $F$ to be a proper nearfield and use near-field automorphisms to obtain yet more examples.

To characterize when two af-near-rings are isomorphic is an open question.

More examples using Theorem \ref{A1} and \ref{A2} will show up in the next section.

Theorem \ref{Tim1} also allows us to explicitly construct  near-rings with identity which have only the identity as single unit element. 
Take $k$ a natural number and $F={\mathbb Z}_2$ and $\alpha_i =id$ for all $i \in \{1, \ldots , k-1\}$. Then $(V,+,*_1)$ is a near-ring with one single unit and it is not a ring. 

If each zero symmetric near-ring with identity which has only one unit is of the form as constructed by Theorem \ref{Tim1} is not known to the authors. At least we can say that their additive groups  are elementary abelian 2-groups and we can say more about their structure when they are $2$-semisimple in  Proposition \ref{NRU1}.

\begin{prop} Let $N$ be a zero symmetric near-ring with identity, which is the only unit element in $N$. 
Then $(N,+)$ is an elementary abelian 2-group.
\end{prop}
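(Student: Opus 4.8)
The plan is to show that every element of $(N,+)$ has order dividing $2$, and that $(N,+)$ is abelian, so that it is elementary abelian. The key observation is that $1$ is the only unit, so by Lemma~\ref{Lemma0} (which applies since $N$ is zero symmetric with identity and we may assume DCCN, or argue directly) the right translation $\psi_n$ is injective only when $n=1$; in particular $\psi_{-1}$ is not injective unless $-1=1$. First I would analyze $\psi_{-1}$: since $(-1)*(-1)=1$ (this holds in any near-ring with identity because $\psi_{-1}$ is an endomorphism, so $\psi_{-1}(\psi_{-1}(x)) = \psi_{-1}(-x)=x$ wait -- one must be careful here, as right translations need not be left-linear). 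Let me reconsider: in a near-ring, $(-1)$ need not multiply nicely, but one does have for the right translation $\psi_{-1}(a)=a*(-1)$, and $\psi_{-1}\circ\psi_{-1}(a) = (a*(-1))*(-1) = a*((-1)*(-1))$ by associativity, so I need to evaluate $(-1)*(-1)$. Using right distributivity, $0 = 0*(-1) = (1+(-1))*(-1) = 1*(-1) + (-1)*(-1) = (-1) + (-1)*(-1)$, hence $(-1)*(-1) = 1$. Therefore $\psi_{-1}$ is an involution, hence a bijection, hence injective, so by the characterization of units ($\psi_r$ an automorphism $\iff r$ a unit) $-1$ is a unit, forcing $-1 = 1$. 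This gives $1+1=0$.

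Next I would propagate this to all of $N$. For arbitrary $n\in N$ we have $n+n = 1*n + 1*n = (1+1)*n = 0*n = 0$ by right distributivity and $1+1=0$; so every element has additive order dividing $2$. A group in which every non-identity element has order $2$ is necessarily abelian (standard: $x+y = -(x+y)^{-1}$-type argument, i.e.\ $(x+y)+(x+y)=0$ together with $x+x=y+y=0$ forces $x+y=y+x$). Hence $(N,+)$ is an elementary abelian $2$-group.

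I do not expect a serious obstacle here; the only subtle point is the justification that $\psi_{-1}$ being bijective makes $-1$ a unit, which the paper has already established in the Background section (the paragraph showing ``the converse also holds'' for right translation automorphisms, and Lemma~\ref{Lemma0}). One should note that $\psi_{-1}$ bijective follows simply from it being an involution on a set mapped to itself, so no chain condition is actually needed for this direction, though invoking Lemma~\ref{Lemma0} is cleanest if DCCN is assumed. An alternative, even more elementary route that avoids $\psi_{-1}$ entirely: the map $n\mapsto -n$ is an automorphism of $(N,+)$, and one checks it is a near-ring automorphism fixing $1$; but the cleanest argument remains the computation $(-1)*(-1)=1$ above, which directly exhibits $-1$ as its own multiplicative inverse and hence a unit. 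Thus the proof is short once the near-ring identity $(-1)*(-1)=1$ is in hand.
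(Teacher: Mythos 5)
Your proof is correct and follows essentially the same route as the paper: establish $(-1)*(-1)=1$ (the paper simply asserts this; you verify it via right distributivity), conclude $-1$ is a unit and hence $-1=1$, then use $n+n=(1+1)*n=0$ and the standard fact that exponent-$2$ groups are abelian. The detour through $\psi_{-1}$ and Lemma~\ref{Lemma0} is unnecessary, as you yourself note, but does no harm.
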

\begin{proof} Since $(-1)(-1)=1$, $-1$ is also the only unit, so $-1=1$. Thus, for any $n \in N$, $n+n=(1+1)n=0$ which shows that $(N,+)$ is an abelian group of exponent $2$, thus an elementary abelian 2-group.
\end{proof}

In the next section we look further at the additive group of a- and f-near-rings.

\section{The additive group}

Let $(N,+)$ be a group. As usual, the exponent of the group is the least common multiple of all the orders of the group elements. For $k \in N$, let $ord(k)$ be the order of $k$ w.r.t.\ the group operation $+$. Then we have the following.
\begin{prop} \cite[Proposition 9.111]{Pilz1}\label{TU1} Let $N$ be a finite near-ring with identity $1$. Let $n$ be the exponent of $(N,+)$. Then $ord(1)=n$.
\end{prop}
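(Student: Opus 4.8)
The statement to prove is Proposition~\ref{TU1}: in a finite near-ring $N$ with identity $1$, the additive order of $1$ equals the exponent of $(N,+)$.

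\medskip

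The plan is to show two inequalities between $\mathrm{ord}(1)$ and the exponent $n$ of $(N,+)$. The easy direction is $\mathrm{ord}(1) \mid n$: since $n$ is the exponent, every element is annihilated by $n$, in particular $n\cdot 1 = 0$ (here $n\cdot 1$ means the $n$-fold additive sum $1+\cdots+1$), so $\mathrm{ord}(1)$ divides $n$. The substantive direction is to prove that $\mathrm{ord}(1)$ is a multiple of $\mathrm{ord}(k)$ for every $k\in N$, which forces $n=\mathrm{lcm}_k \mathrm{ord}(k) \mid \mathrm{ord}(1)$, and combined with the first inequality yields $\mathrm{ord}(1)=n$.

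\medskip

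For that substantive direction, the key observation is the standard fact that in any near-ring the map $m \mapsto m\cdot 1 = m$ is the identity, but more usefully, for any $k \in N$ and any positive integer $j$, one has $j\cdot k = (j\cdot 1)\cdot k$ by repeated application of right distributivity: $(1+\cdots+1)\cdot k = 1\cdot k + \cdots + 1\cdot k = k+\cdots+k$. Hence if $\mathrm{ord}(1) = t$, then $t\cdot 1 = 0$, so $t\cdot k = (t\cdot 1)\cdot k = 0\cdot k = 0$ using that $N$ is zero-symmetric (more precisely, that $0\cdot k = 0$, which holds in any near-ring since $0\cdot k = (0+0)\cdot k = 0\cdot k + 0\cdot k$). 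Therefore $\mathrm{ord}(k) \mid t = \mathrm{ord}(1)$ for every $k$, so the exponent $n$ divides $\mathrm{ord}(1)$.

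\medskip

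Putting the two divisibilities together gives $\mathrm{ord}(1) = n$. I do not anticipate a genuine obstacle here; the only point requiring a little care is being explicit that $j\cdot k$ (a sum of copies of $k$) coincides with $(j\cdot 1)\cdot k$ via right distributivity, and that $0\cdot k = 0$ — both are elementary consequences of the near-ring axioms already recalled in the Background section, so I would state them in one line each rather than belabor them. Since the paper cites \cite[Proposition 9.111]{Pilz1} for this, one could alternatively just invoke the reference, but the argument above is short enough to include in full.
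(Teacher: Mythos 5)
Your proof is correct and complete: the two divisibilities $\mathrm{ord}(1)\mid n$ and, via $t\cdot k=(t\cdot 1)*k=0*k=0$ from right distributivity, $\mathrm{ord}(k)\mid\mathrm{ord}(1)$ for all $k$, together give the claim. The paper offers no proof of its own here (it only cites Pilz), and your argument is precisely the standard one behind that reference, so there is nothing to reconcile.
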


\begin{thm}\label{thm1} 
Let $(N,+,*)$ be a finite a-near-ring. Then, $(N,+)$ is a group of exponent $p$, $ord(1)=p$, $p$ a prime number.
\end{thm}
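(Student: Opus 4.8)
The plan is to combine Proposition~\ref{TU1} with the additive closure of the units, plus a divisibility argument showing that the exponent must be prime. By Proposition~\ref{TU1}, it suffices to show that $ord(1) = p$ for some prime $p$; equivalently, that the exponent $n$ of $(N,+)$ is prime. Write $U_0 = U \cup \{0\}$ for the subnear-field formed by the units together with zero, which exists by the a-near-ring hypothesis.

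First I would observe that $1 \in U_0$, so the additive subgroup of $(N,+)$ generated by $1$ lies inside $(U_0,+)$. Since $U_0$ is a (finite) near-field, its additive group is elementary abelian of exponent $q$ for some prime $q$ (a standard fact about finite near-fields: the prime field of the near-field sits inside, and by Proposition~\ref{TU1} applied to $U_0$ itself, $ord(1)$ in $U_0$ equals the exponent of $(U_0,+)$, which for a near-field is prime). Hence $ord(1) = q$ in $(N,+)$ as well, because the order of $1$ does not depend on the ambient group. Applying Proposition~\ref{TU1} to $N$, the exponent of $(N,+)$ equals $ord(1) = q$. Thus $(N,+)$ has prime exponent $p := q$, which is exactly the assertion.

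The one point that needs care — and which I expect to be the main obstacle — is justifying that a finite near-field has additive group of prime exponent. This is classical but one should cite or argue it cleanly: in a finite near-field $F$, for the identity $1$ one has $ord(1) = q$ prime (if $ord(1) = ab$ with $a,b > 1$, then $a\cdot 1$ would be a nonzero non-unit, impossible since every nonzero element of a near-field is a unit and $(a\cdot 1)$ times its inverse would have to give $1$, forcing $a \cdot 1$ to have additive order dividing... — more directly, $ord(1)$ must be prime because $\mathbb{Z}\cdot 1$ is a subnear-ring which is a finite near-ring with no zero divisors among its nonzero elements that are units, hence an integral domain $\mathbb{Z}_q$ with $q$ prime). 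Then Proposition~\ref{TU1} applied inside $F$ gives that the exponent of $(F,+)$ equals $ord(1) = q$, so $(F,+)$ is elementary abelian. Feeding $F = U_0$ back in finishes the proof; alternatively, one can bypass the near-field structure theory entirely by noting $\mathbb{Z} \cdot 1 \subseteq U_0$ is itself a finite zero-symmetric near-ring with identity in which every nonzero element is a unit, hence a finite near-field on $\le ord(1)$ elements, and rerun the prime-order argument there.
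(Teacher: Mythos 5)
Your proposal is correct and takes essentially the same route as the paper: both arguments come down to the subring $\mathbb{Z}\cdot 1$, which lies inside the subnear-field $U_0$ and is isomorphic to $\mathbb{Z}_{ord(1)}$, whose nonzero elements are units of $N$, forcing $ord(1)$ to be prime, and then Proposition~\ref{TU1} transfers this to the exponent of $(N,+)$. The only cosmetic difference is that the paper goes on to exhibit inverses $s^{m-1}$ inside this subring to make it a field, whereas you only invoke the absence of zero divisors (and your passing claim that $(U_0,+)$ is elementary \emph{abelian} is stronger than needed and not used).
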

\begin{proof} Let $n$ be the exponent of $(N,+)$. We have to show that $n=p$, $p$ a prime number.
Consider the cyclic group $(\left<1\right>,+)$ (additively) generated by $1$.  By assumption, $(\left<1\right> \cup \{0\}) \subseteq U \cup \{0\}$. Let $\cdot $ be the usual product in the set of integers and $m,k$ be two positive integers. Let $m\cdot 1=1+\ldots + 1$ ($m$ times $1$) and $k\cdot 1=1+\ldots +1$ ($k$ times $1$) be two elements of $\left<1\right>$. By the right distributive law in $N$ and by using the fact that $1$ is the identity we get $(m\cdot 1)*(k\cdot 1)=(m\cdot k)\cdot 1$. This  is certainly contained in  $\left<1\right>$. 
Consequently, $S:=(\left<1\right>, +, *)$ is a subnear-ring of $N$ containing the identity (see also \cite{Maxson1}), even more, it is a subnear-ring of the  subnear-field $(U \cup \{0\}, +, *)$ of $N$. Actually, $S$ is a ring. It is easy to see that $S$ is  a ring isomorphic to $\Bbb{Z}_n$ since the function $\psi: S \longrightarrow \Bbb{Z}_n, m\cdot 1 \mapsto [m]_n$ is a near-ring isomorphism ($[m]_n$ is the usual congruence modulo $n$). For completeness we give a proof.

$\psi$ is well defined: Let $m\cdot 1 = k\cdot 1$ and suppose $k>m$. Then $(k-m)\cdot 1=0$. Since $n$ is the additive order of $1$ by Proposition \ref{TU1}, $n \mid (k-m)$ and therefore $[k-m]_n=0$. So $[k]_n=[m]_n$ and $\psi$ is well defined. Furthermore, $\psi(m\cdot 1 + k\cdot 1)=\psi((m+k)\cdot 1)=[m+k]_n=[m]_n+[k]_n=\psi(m\cdot 1)+\psi(k\cdot 1)$. Also, $\psi((m\cdot 1)*(k\cdot 1))=\psi(( m\cdot k)\cdot 1)=[m\cdot k]_n=[m]_n\cdot [k]_n$. Consequently, $\psi$ is a near-ring homomorphism. Let $m\cdot 1$ be in the kernel of $\psi$. Then, $\psi(m\cdot 1)=[m]_n=[0]_n$ and consequently $n \mid m$ and we have $m\cdot 1=0$ since $n$ is the exponent of $(N,+)$. $\psi$ certainly is surjective, so we see that $S$ is isomorphic to $\Bbb{Z}_n$. 

Since $S$ is contained in a finite near-field, every non-zero element $s \in S$ has a finite multiplicative order $m$. 
But then $s^{m-1}s=1$ and $s^{m-1} \in S$, so each element in $S$ is invertible w.r.t. multiplication. So, $S$ is a field. Therefore, $n=p$ for some prime $p$. 
\end{proof}

The situation is even clearer for f-near-rings.

\begin{thm}\label{CorA} Let $(N,+,*)$ be a finite f-near-ring. Then, $(N,+)$ is an elementary abelian group.  
\end{thm}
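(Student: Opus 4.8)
The plan is to exploit the fixed-point-free action of the unit group on $(N,+)$, which is already available from the definition of an f-near-ring. First I would recall the classical fact about fpf automorphism groups of finite groups: if $\Phi$ acts fixedpointfreely on a finite group $(N,+)$, then for any prime $q$ dividing $|N|$ the Sylow structure is severely constrained. In particular, a standard result (see, e.g., the theory of Frobenius complements and fpf actions) says that a finite group admitting a nontrivial fpf automorphism is nilpotent, and moreover that an element of prime order $q$ in $\Phi$ forces the Sylow $q$-subgroups to be cyclic or generalized quaternion. However, rather than invoke the full force of this machinery, I would try to argue directly using the additional near-ring structure, since we have more than just an abstract fpf action.

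The key step is to use Proposition \ref{TU1}: since $N$ is a finite near-ring with identity, $\mathrm{ord}(1)$ equals the exponent of $(N,+)$. So it suffices to show that $\mathrm{ord}(1)$ is squarefree and that $(N,+)$ is abelian. For the exponent, I would look at the additive cyclic group $\langle 1\rangle$. Multiplication by any unit $u$ acts as the automorphism $\psi_u$; since $1\cdot u = u$ need not lie in $\langle 1\rangle$ in general (unlike the a-near-ring case), the argument of Theorem \ref{thm1} does not transfer directly — this is where the a- and f-cases genuinely diverge. Instead I would argue as follows: if $(N,+)$ has exponent $n$ with $q^2 \mid n$ for some prime $q$, then $(N,+)$ has an element, hence (by Proposition \ref{TU1} applied suitably, or by passing to the Sylow $q$-subgroup) a cyclic subgroup of order $q^2$ that is $\Psi_U$-invariant; but a finite cyclic group of order $q^2$ has no fpf automorphism of order coprime to $q$ other than inversion-type maps, and one must check the unit $-1$ (which exists since $-1=(-1)\cdot 1$ and $(-1)^2=1$) does not produce a fixed point on such a subgroup — combined with the structure forced on the $q$-part, this should yield exponent $p$ squarefree, and then primeness follows because the nilpotent group $(N,+)$ would otherwise have an fpf-invariant subgroup on which two different primes' worth of units would clash, exactly as in the $k\geq 2$ argument in Theorem \ref{Lem1}. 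The cleanest route is probably: (i) the fpf action makes $(N,+)$ nilpotent, so it is the direct product of its Sylow subgroups $P_q$; (ii) each $P_q$ is $\Psi_U$-invariant and inherits the fpf action; (iii) an fpf automorphism group on a finite $q$-group forces $P_q$ to be elementary abelian once we know the exponent of each $P_q$ is $q$ — and the exponent statement reduces, via $\mathrm{ord}(1)$, to showing $\langle 1\rangle$ itself has squarefree order, which one gets by noting $\langle 1\rangle$ generates, additively, an $N$-subgroup structure whose units include all of $\langle 1\rangle^\times$ only in the a-case, so here one must instead observe that $\psi_{-1} = -\mathrm{id}$ is fpf, forcing $2 \nmid$ something, etc.

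I expect the main obstacle to be precisely this divergence from the a-near-ring proof: in the f-near-ring case we cannot conclude that $\langle 1\rangle$ is a subnear-field, so the slick reduction "a finite sub-near-field has prime additive order" is unavailable. The honest fix is to cite (or reprove) the group-theoretic input that a finite group with a fixedpointfree automorphism group is nilpotent, and that for any prime $q$, an fpf automorphism group acting on a finite $q$-group $P$ with the automorphism $-\mathrm{id}\in\Psi_U$ present (plus the other units) forces $P$ to be elementary abelian; the presence of $-1$ as a unit is the one near-ring-specific fact that makes $-\mathrm{id}$ an available fpf automorphism, and it is what rules out, e.g., cyclic Sylow subgroups of order $q^2$ surviving. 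Modulo citing that classical fact, the write-up is short: nilpotency gives the direct product decomposition into Sylow subgroups, fpf-invariance of each Sylow subgroup is automatic, and elementary-abelianness of each follows, so $(N,+)$ is elementary abelian.
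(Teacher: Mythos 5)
There is a genuine gap, and it sits exactly where you locate the difficulty: the claim that the exponent of $(N,+)$ is a prime. You assert that the reduction via the additive subgroup $\langle 1\rangle$ ``does not transfer'' from the a-case, but in fact it does: the computation $(m\cdot 1)*(k\cdot 1)=(m\cdot k)\cdot 1$ uses only right distributivity and the fact that $1$ is the identity, so $S=(\langle 1\rangle,+,*)$ is always a subring isomorphic to $\mathbb{Z}_n$, where $n$ is the exponent of $(N,+)$ (Proposition \ref{TU1}); no additive closure of the units is needed for this. The units of $S$ are units of $N$, and a nonzero fixed point in $S$ would be a nonzero fixed point in $N$, so $S$ inherits the f-property. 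If $n\neq 2$ then $|U_S|\geq 2$, and the implication $(c)\Rightarrow(a)$ of Theorem \ref{Lem1} applies to the finite ring $S$, giving that $S$ is a field and hence $n=p$ is prime. This is the step your proposal never actually carries out: your substitute --- nilpotency of $(N,+)$ via Frobenius-kernel theory, a Sylow decomposition, and a remark ending in ``forcing $2\nmid$ something, etc.'' --- does not prove the exponent is squarefree, let alone prime. Moreover, the assertion that an fpf automorphism group containing $-\mathrm{id}$ forces a finite $q$-group to be elementary abelian is false as stated: $-\mathrm{id}$ is a fixed-point-free automorphism of $\mathbb{Z}_9$, which is not elementary abelian. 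The presence of $-\mathrm{id}$ as an fpf automorphism of order two buys you exactly abelianness (the classical result the paper cites from Gorenstein), not the exponent.

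Once the exponent is known to be prime, the rest is short and matches the tail of your sketch: if $p=2$ then $m+m=(1+1)m=0$ for all $m$ and we are done; if $p>2$ then $-1\neq 1$ is a unit lying in $S\subseteq N$, so $\psi_{(-1)}$ is an fpf automorphism of $(N,+)$ of order two, and a finite group admitting such an automorphism is abelian. Abelian of prime exponent is elementary abelian. So the fix is not to import Frobenius--Thompson machinery but to notice that $\langle 1\rangle$ is itself a finite f-ring and to feed it into Theorem \ref{Lem1}.
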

\begin{proof} Let $n$ be the exponent of $(N,+)$. If $n=2$, then for all $m \in N$ we have $0=(1+1)m=m+m$, so $(N,+)$ is elementary abelian. 

Now we assume that $n \neq 2$. 
As in the proof of Theorem \ref{thm1} we consider the subring $S$, generated by the identity element $1$. Let $U_S$ be the group of units of $S$. Since $n\neq 2$ and $S$ is isomorphic to the ring $\Bbb{Z}_n$ we have $\mid U_s\mid \geq 2$. Since $S$ and $N$ have the same identity, an element $s$ being invertible in $S$ is invertible in $N$. Therefore $U_S \subseteq U$. By assumption, $\Psi_U$ does not have non-zero fixed points in the set $N$ and consequently, $\Psi_{U_s}$ does not have non-zero fixed points in $S$. Therefore, $(\Psi_{U_s},\circ)$ is a group of fpf automorphisms of $(S,+)$. Consequently, $S$ fulfilles the assumptions of Theorem \ref{Lem1} and it follows that $S$ is a field. Therefore, $n=p$ for some prime $p$. 

Since $-1 \in S$ and $1 \neq -1$, we have that $-1 \in U_S \subseteq U$. Therefore, $\psi_{(-1)} \in \Psi_U$ and by assumption $\psi_{(-1)}$ is a fpf automorphism and the order of $\psi_{(-1)}$ is two. Groups admitting fpf automorphisms of order $2$ are known to be abelian (see for example \cite[Chapter 10, Theorem 1.4]{Gor}). The proof is finished.
\end{proof}

From this result  we know that Example \ref{TG1} is a proper a-near-ring, because the additive group is not abelian.

While not every f-near-ring is additively closed, there is an additive closure property.

\begin{cor}
The set of units of a finite f-near-ring is a union of cyclic subgroups  (all of the same prime order $p$) of the additive group.
\end{cor}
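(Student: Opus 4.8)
The plan is to combine Theorem \ref{CorA} with right distributivity. First I would invoke Theorem \ref{CorA}: since $N$ is a finite f-near-ring, $(N,+)$ is an elementary abelian $p$-group for some prime $p$, so every nonzero element $n$ generates an additive cyclic subgroup $\langle n\rangle$ of order exactly $p$, with $\langle n\rangle\setminus\{0\}=\{k\cdot n\mid 1\le k\le p-1\}$, where $k\cdot n$ denotes the $k$-fold additive sum of $n$. Thus it suffices to show that for every unit $u$ and every $k$ with $1\le k\le p-1$, the element $k\cdot u$ is again a unit; then each $\langle u\rangle$ is a cyclic subgroup of order $p$ all of whose nonzero elements are units.

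Next I would record the identity $k\cdot u=(k\cdot 1)*u$, valid for every $u\in N$ and every positive integer $k$. This is immediate from the right distributive law together with $1*u=u$: writing $k\cdot 1=1+\cdots+1$ ($k$ summands) gives $(k\cdot 1)*u=1*u+\cdots+1*u=u+\cdots+u=k\cdot u$.

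Then I would check that $k\cdot 1$ is a unit of $N$ whenever $1\le k\le p-1$. As in the proofs of Theorems \ref{thm1} and \ref{CorA}, the subring $S=\langle 1\rangle$ generated by $1$ is a field isomorphic to $\mathbb{Z}_p$ (here we use Proposition \ref{TU1} to identify $p$ with the additive order of $1$, and Theorem \ref{CorA} that the exponent of $(N,+)$ is the prime $p$). In this field every nonzero element is invertible, and since $S$ and $N$ share the identity $1$, an element invertible in $S$ is invertible in $N$; hence $k\cdot 1\in U$. Concretely, choosing $k'$ with $kk'\equiv 1\pmod p$ gives $(k\cdot 1)*(k'\cdot 1)=(kk')\cdot 1=1$.

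Finally I would assemble the pieces: for $u\in U$ and $1\le k\le p-1$, the element $k\cdot u=(k\cdot 1)*u$ is a product of two units and hence a unit. Therefore $\langle u\rangle\setminus\{0\}\subseteq U$ for every $u\in U$, so that $U=\bigcup_{u\in U}\bigl(\langle u\rangle\setminus\{0\}\bigr)$ and $U_0=\bigcup_{u\in U}\langle u\rangle$ is a union of additive cyclic subgroups, each of order the common prime $p$. I do not anticipate any genuine obstacle; the only points requiring a moment's care are the identity $k\cdot u=(k\cdot 1)*u$, which rests solely on right distributivity and on $1$ being the multiplicative identity, and the fact — furnished by Theorem \ref{CorA} — that every unit has additive order the single prime $p$, so that the cyclic subgroups involved all have the same order.
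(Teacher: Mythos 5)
Your proposal is correct and follows essentially the same route as the paper: both reduce the claim to the identity $k\cdot u=(k\cdot 1)*u$ from right distributivity and the fact (from the proof of Theorem \ref{CorA}) that the subring generated by $1$ is a prime field, so each $k\cdot 1$ with $1\le k\le p-1$ is a unit and hence so is $k\cdot u$. Your write-up merely makes explicit the details the paper leaves implicit, such as why all the cyclic subgroups have the same order $p$.
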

\begin{proof}
Let $N$ be a f-near-ring, $u \in N$ be a unit.
Note that by the proof of Theorem \ref{CorA}, the  subring $S$ generated by $1\in N$ is a prime field and thus all nonzero elements in $S$ are units.
Then $u+...+u=(1+...+1)u$ so $u+...+u$ is a unit, so $U$ is closed under the taking of cyclic subgroups.
\end{proof}

\section{Special types of a- and f-near-rings}
In this section we investigate several special classes of a- and f-near-rings.

\subsection{Simple a-near-rings and f-near-rings}
We study simple and semisimple a- and f-near-rings with DCCN and see that apart from one exceptional case in the class of {a-near-rings}, these are near-fields.

\begin{thm}\label{Theorem1} Let $(N,+,*)$ be a simple a-near-ring with DCCN and $U$ its set of units. If $N$ is a ring with $|U|=1$, then $N$ is isomorphic to a direct product of the fields $\Bbb{Z}_2$. If $N$ is a ring with $|U| \geq 2$, then $N$ is a field. If $N$ is not a ring, then $N$ is a near-field or $N \cong M_0(\Bbb{Z}_3)$.
\end{thm}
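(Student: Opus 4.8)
The plan is to handle the ring cases first by citing earlier results, and then concentrate on the genuinely new case where $N$ is not a ring. For the ring subcases: if $|U|=1$ apply Proposition~\ref{Ring1} (a simple near-ring with DCCN that is a ring has DCCL), and note that a direct product of copies of $\Bbb{Z}_2$ is simple only when there is a single factor, so in fact $N\cong\Bbb{Z}_2$; if $|U|\geq 2$ apply Theorem~\ref{Lem1} to conclude $N$ is a field. These are essentially immediate, so the bulk of the write-up is the non-ring case.

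So assume $N$ is a simple a-near-ring with DCCN which is not a ring, with set of units $U$ and subnear-field $U_0=U\cup\{0\}$. The key structural fact I would use is that $\Psi_U$ acts on $(N,+)$: for a unit $u$ and any $r\in N$, if $ru=r$ then $r(1-u)=0$, and since $1-u\in U_0$ (additive closure), either $1-u=0$, i.e. $u=1$, or $1-u$ is a unit forcing $r=0$. Hence an a-near-ring is automatically an f-near-ring whenever $|U|\geq 2$ — this is the argument already used in $b\Rightarrow c$ of Theorem~\ref{Lem1}. So $N$ is an af-near-ring. The first main step is therefore: if $|U|=1$, then by the Proposition just before Section~5, $(N,+)$ is elementary abelian $2$-group, and one must show a simple such near-ring that is not a ring must be $M_0(\Bbb{Z}_3)$ — but $M_0(\Bbb{Z}_3)$ has nontrivial units, so actually $|U|=1$ is impossible for a non-ring simple a-near-ring, or leads only to $\Bbb{Z}_2$; I would verify that $M_0(\Bbb{Z}_3)$ indeed has $|U|\geq 2$ so that the exceptional case sits in the $|U|\geq 2$ branch. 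The second and main step: assume $|U|\geq 2$, so $\Psi_U$ is a nontrivial fpf automorphism group of $(N,+)$. By Theorem~\ref{thm1}, $(N,+)$ has prime exponent $p$, so it is an elementary abelian $p$-group, a vector space over $\Bbb{Z}_p$. Now invoke the structure theory of near-rings with DCCN: a simple near-ring is (by the standard Wielandt/Betsch-type classification, e.g. Pilz~\cite{Pilz1}) either a near-field or a dense subnear-ring of $M_0(\Gamma)$ for some group $\Gamma$; since it is simple with DCCN and $0$-symmetric, it is $2$-primitive, hence either a near-field or equals $M_0(\Gamma)$ for a finite group $\Gamma$ acting faithfully. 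The task then reduces to: which $M_0(\Gamma)$ are a-near-rings? The units of $M_0(\Gamma)$ are the bijective zero-fixing maps, i.e. $\mathrm{Sym}(\Gamma\setminus\{0\})$ sitting inside, and additive closure of $U_0$ in the pointwise-addition structure of $M_0(\Gamma)$ is an extremely restrictive condition. I expect a short combinatorial count — comparing $|U_0|$ as a subgroup of $(M_0(\Gamma),+)$, which would have to be $p^{\dim}$ for the relevant prime, against $|U_0|=|\mathrm{Sym}(\Gamma\setminus\{0\})|+? $ — to force $|\Gamma|=3$, i.e. $\Gamma=\Bbb{Z}_3$ and $N=M_0(\Bbb{Z}_3)$, while all larger $\Gamma$ fail because $|\mathrm{Sym}(\Gamma\setminus\{0\})|+1$ is not a prime power or the sum of two units fails to be a unit or zero. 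One must also check $M_0(\Bbb{Z}_3)$ genuinely is an a-near-ring: here $\Gamma\setminus\{0\}$ has two elements, $\mathrm{Sym}$ of it has order $2$, so $U_0$ has $3$ elements and is readily checked to be an additive (in fact sub-near-field) copy of $\Bbb{Z}_3$, and $M_0(\Bbb{Z}_3)$ is simple with DCCN but not a ring.

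The main obstacle I anticipate is the reduction ``simple a-near-ring with DCCN that is not a near-field must be $M_0(\Bbb{Z}_3)$'': one needs to correctly cite the classification of simple (equivalently, under DCCN, $2$-primitive) $0$-symmetric near-rings as dense subnear-rings of $M_0(\Gamma)$ and then argue density plus the a-property collapses the ``dense subnear-ring'' to all of $M_0(\Gamma)$ and pins $\Gamma$ down to $\Bbb{Z}_3$. The delicate points are (i) justifying that simplicity together with DCCN yields $2$-primitivity and hence the $M_0(\Gamma)$ representation with $\Gamma$ finite; (ii) showing the a-near-ring condition, which is about additive closure of the units inside $N$, transfers through the isomorphism $N\cong$ (dense subnear-ring of $M_0(\Gamma)$) to a statement forcing the subnear-ring to be all of $M_0(\Gamma)$ — presumably because $U_0$ additively generating enough of $M_0(\Gamma)$, combined with $N\cdot N\subseteq N$ and density, leaves no room for a proper dense subnear-ring; and (iii) the final elementary computation that among all finite groups $\Gamma$ with $|\Gamma|\geq 3$, only $\Gamma=\Bbb{Z}_3$ makes $U_0$ additively closed in $M_0(\Gamma)$. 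Steps (i) and (iii) are standard or routine; step (ii) is where the real content lies and where I would spend most of the proof.
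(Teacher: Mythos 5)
There are two genuine gaps. First, your ``key structural fact'' is false for near-rings: from $r*u=r$ you infer $r*(1-u)=0$, but that step needs \emph{left} distributivity, which a near-ring does not have (only $(a+b)*c=a*c+b*c$ holds, so $r*u-r*1$ cannot be rewritten as $r*(1-u)$). This is exactly why the paper proves $b\Rightarrow c$ of Theorem~\ref{Lem1} only for rings. Indeed, Example~\ref{TG1} is an a-near-ring with $|U|=2$ whose additive group is the non-abelian Heisenberg group of order $27$; by Theorem~\ref{CorA} it cannot be an f-near-ring, so a-near-rings with $|U|\geq 2$ are \emph{not} automatically f-near-rings, and your subsequent deduction that $(N,+)$ is elementary abelian also fails (Theorem~\ref{thm1} only gives prime exponent, not commutativity).

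Second, and more seriously for the main case, the classification you invoke is misquoted. Under DCCN a simple zero-symmetric near-ring with identity that is not a ring is $2$-primitive and, by the density theorem (Pilz, Theorem 4.52), is isomorphic to $M_{G^{0}}(\Gamma)$ where $G=Aut_N(\Gamma)$ is a fixed-point-free automorphism group of the faithful $N$-group $\Gamma$ --- not merely ``a near-field or $M_0(\Gamma)$''. For instance $M_{G^{0}}(\Bbb{Z}_7)$ with $|G|=3$ is simple, has an identity, and is neither a near-field nor of the form $M_0(\Gamma)$. Your combinatorial count of units inside $M_0(\Gamma)$ therefore only treats the case $G=\{id\}$ and cannot exclude these intermediate near-rings; this exclusion is where the real content of the theorem lies. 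The paper does it by showing the number $n$ of $G$-orbits satisfies $n\leq 2$: the interpolation theorem produces an element $k$ swapping two orbit representatives and fixing the rest, $k$ is shown to be a unit via Lemma~\ref{Lemma0}, and the a-property applied to $k-1$ forces the remaining representatives to vanish; then in the case $n=2$ it excludes $|G|\geq 2$ by exhibiting a unit $\overline{h}$ for which $id-\overline{h}$ is nonzero and non-injective, again contradicting the a-property. Also, the step (ii) you flag as the hard part is not actually an issue: with DCCN the density theorem gives equality with $M_{G^{0}}(\Gamma)$, not just a dense subnear-ring. The ring cases and the verification that $M_0(\Bbb{Z}_3)$ is a simple a-near-ring are fine as you present them.
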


\begin{proof}  From Theorem \ref{Lem1} and Proposition \ref{Ring1} the results concerning the ring case follow.  So we suppose that $N$ is not a ring in the following. 

Due to the DCCN there exists a minimal $N$-subgroup $M$ of $N$. Since $N$ is assumed to be simple, $N$ acts faithfully on $M$, so $(0:M):=\{n\in N \mid \forall m \in M: nm=0\}=\{0\}$. 
Due to the fact that $N$ has an identity element, we have that $Nm \neq \{0\}$ for each non-zero element $m \in M$ and so, $Nm=M$. Consequently, $N$ acts $2$-primitively on $M$ (see \cite[Corollary 4.47]{Pilz1}). 
For $m_1, m_2 \in M$ we define $m_1 \sim m_2$ iff $(0:m_1)=(0:m_2)$. $\sim$ is an equivalence relation in $M$ (see \cite[Remark 4.20]{Pilz1}). Since $N$ has DCCN there is only a finite number of represantatives of $\sim$ by \cite[Theorem 4.46]{Pilz1}. Let $m_1, m_2, \ldots , m_n$ ($n$ a natural number) be a complete set of representatives of the non-zero equivalence classes of $\sim$ in $M$.  Since $N$ has an identity element only the zero $0$ is equivalent to $0$. 

We now assume that $n > 2$, so there exist two different non-zero elements $m_1, m_2$ in $M$ such that $m_1 \not \sim m_2$. By \cite[Theorem 4.30]{Pilz1} there is an element $k \in N$ such that $km_1=m_2$, $km_2=m_1$ and $km_i=m_i$ for $i \geq 3$. $k \neq 1$ because $m_1 \neq m_2$.

Let $a \in (0:k):=\{n \in N \mid nk=0\}$. Thus, for each $i \in \{1, \ldots n\}$, $am_i=0$ (because $am_1=akm_2=0$ and so on). 
Let $m \in M\setminus\{0\}$. Therefore, there is an element $j \in \{1, \ldots , n\}$ such that $m \sim m_j$. 
Since $a \in (0:m_j)$, we now also have $a \in (0:m)$ and so we must have $am = 0$ for each $m \in M$. 
Since $M$ is faithful, we have $a=0$. Thus, $(0:k)=\{0\}$ and consequently, the map $\psi_k$ is injective. Lemma \ref{Lemma0} shows that $k$ is a unit.

From $km_i=m_i$ for $i \geq 3$ we get $(k-1)m_i=0$. By assumption $k-1$ is a unit or zero. The latter case cannot be, so $k-1$ is a unit which implies $m_i=0$. This contradicts the fact that $m_1, m_2, \ldots , m_n$, $n>2$, is a complete set of representatives of the non-zero equivalence classes of $\sim$ in $M$. Hence, $n \leq 2$. 

Suppose $n=1$, so there is only one non-zero equivalence class w.r.t $\sim$ in $M$.
Let $m \in M\setminus \{0\}$ and let $a \in (0:m)$. Then, for any other non-zero element $n \in M$ we must have $a \in (0:n)=(0:m)$. Thus, $aM=\{0\}$ and by faithfulness of $M$ we get that $a=0$. 
Thus, $\psi_m$ is injective for any non-zero element $m \in M$. So, any $m \in M\setminus \{0\}$ is a unit in $N$ by Lemma \ref{Lemma0}. Since $NM \subseteq M$ we must have $1 \in M$ and therefore $N=M$. Thus, $N$ is a near-field.

Finally, suppose $n=2$: $N$ is a $2$-primitive near-ring with identity acting $2$-primitively on $M$. 
Hence, by \cite[Theorem 4.52]{Pilz1} $N \cong M_{G^{0}}(M)$ where $G^{0}:=Aut_N(M)\cup \{\overline 0\}$, $\overline 0$ being the zero function on $M$ and $G:=Aut_N(M)$ ($G$ may be just the set containing the identity map). $G$, the set of $N$-automorphisms, acts without fixed points on $M$. Since $\forall m \in M\setminus\{0\}: Nm=M$, by \cite[Proposition 4.21]{Pilz1} we have that for all $a, b \in M\setminus\{0\}$: $a\sim b$ $\Leftrightarrow$ $G(a)=G(b)$. Thus, $G$ has two non-zero orbits on $M$ with orbit representatives $e_1$, $e_2$, say.  Let $f: \{e_1, e_2\} \longrightarrow M$ be a function. $G$ acts without fixedpoints on $M$ so by \cite[Theorem 4.28, Proposition 7.8]{Pilz1} this function can be uniquely extended to a function $\overline{f} \in M_{G^{0}}(M)$ by defining $\overline{f}(0):=0$, $\overline{f}(m):=g(f(e_1))$ in case $m = g(e_1) \in G(e_1)$ and in case $m = g(e_2) \in G(e_2)$, $\overline{f}(m):=g(f(e_2))$.

We now assume that $|G| \geq 2$ and let $id \neq g \in G$. Let $h: \{e_1, e_2\} \longrightarrow M$ be the function $h(e_1)=e_1$ and $h(e_2)=g(e_2)$ and $\overline{h}$ be its extension in $M_{G^{0}}(M)$. Let $i: \{e_1, e_2\} \longrightarrow M$ be the function $i(e_1)=e_1$ and $i(e_2)=g^{-1}(e_2)$ and $\overline{i}$ its extension in $M_{G^{0}}(M)$. Then, $i$ is the inverse to $f$ w.r.t. the near-ring multiplication (which is function composition) in $M_{G^{0}}(M)$. On the other hand, $(id-\overline{h}) \in M_{G^{0}}(M)$ and $(id-\overline{h})(e_1)=0$. Thus, the non-zero function $(id-\overline{h})$ is not injective and consequently not a unit in $M_{G^{0}}(M)$. But this contradicts the assumption that $N$ is an a-near-ring.

From this contradiction it follows that $|G|=1$, so $G$ only contains the identity map. But we have that $G$ has two non-zero orbits in $M$ and this implies that $M$ is a group with $3$ elements. Hence, $N \cong M_0(\Bbb{Z}_3)$. $N$ has $9=3^{2}$ elements and two units, namely the identity function and the function switching $1$ and $2$. It is easy to see that in this case the units with zero form a subfield of $N$.  
\end{proof} 

For simple f-near-rings  we get similar results with similar methods.

\begin{thm}\label{K1} 
 Let $(N,+,*)$ be a simple f-near-ring with DCCN and $U$ its set of units. If $N$ is a ring with $|U|=1$, then $N$ is isomorphic to a direct product of the fields $\Bbb{Z}_2$. If $N$ is a ring with $|U| \geq 2$, then $N$ is a field. In case $N$ is not a ring $N$ is a near-field.
\end{thm}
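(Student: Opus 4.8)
The plan is to mirror the proof of Theorem~\ref{Theorem1} almost verbatim, since the f-near-ring hypothesis is, in the relevant situations, at least as strong as the a-near-ring hypothesis. First I would dispose of the ring case by quoting Theorem~\ref{Lem1} and Proposition~\ref{Ring1} exactly as before, so that we may assume $N$ is not a ring. Then I would set up the same structure theory: by the DCCN there is a minimal $N$-subgroup $M$, simplicity forces $N$ to act faithfully on $M$, the presence of an identity gives $Nm=M$ for all nonzero $m\in M$, and hence $N$ acts $2$-primitively on $M$ (\cite[Corollary 4.47]{Pilz1}). Introduce the equivalence $m_1\sim m_2 \iff (0:m_1)=(0:m_2)$, which has only finitely many nonzero classes with representatives $m_1,\dots,m_n$ by \cite[Theorem 4.46]{Pilz1}, and only $0$ is equivalent to $0$.

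Next I would rule out $n>2$ by the identical argument: if $n>2$, pick $m_1\not\sim m_2$ and use \cite[Theorem 4.30]{Pilz1} to get $k\in N$ with $km_1=m_2$, $km_2=m_1$, $km_i=m_i$ for $i\geq 3$, and $k\neq 1$. Showing $(0:k)=\{0\}$ proceeds exactly as in Theorem~\ref{Theorem1}, so $\psi_k$ is injective and $k$ is a unit by Lemma~\ref{Lemma0}. Now the f-near-ring property enters: from $km_i=m_i$ ($i\geq 3$) we get $\psi_k$ fixes $m_i$, but $\psi_k\in\Psi_U$ is fpf and $k\neq 1$, so $m_i=0$, contradicting $n>2$. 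Hence $n\leq 2$. The case $n=1$ is handled word-for-word as in Theorem~\ref{Theorem1}: every nonzero $m\in M$ has $\psi_m$ injective, hence is a unit, forcing $1\in M$, $N=M$, and $N$ a near-field.

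The remaining case is $n=2$. Here I would again invoke \cite[Theorem 4.52]{Pilz1} to write $N\cong M_{G^0}(M)$ with $G=Aut_N(M)$ acting fpf on $M$ with exactly two nonzero orbits, with representatives $e_1,e_2$. The goal is to show $|G|=1$ is impossible (unlike the a-near-ring case, where $|G|=1$ survived and gave $M_0(\mathbb{Z}_3)$); for the f-near-ring we expect $|G|\geq 2$ to also fail, so that $n=2$ cannot occur at all and the conclusion is simply: $N$ is a near-field. The key point: in $M_0(\mathbb{Z}_3)$ the identity function $id$ and the transposition $\tau$ of the two nonzero elements are the units, and $\psi$-translation means composing on the one side; one checks that the unit $\tau$, viewed as a right translation $\psi_\tau$ on $N=M_0(\mathbb{Z}_3)$, has a nonzero fixed point (for instance a constant-type or suitably chosen function $f$ with $f\circ\tau=f$, i.e.\ any $f$ that is constant on $\{1,2\}$), so $M_0(\mathbb{Z}_3)$ is \emph{not} an f-near-ring. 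More generally, for $|G|\geq 2$ I would produce, as in Theorem~\ref{Theorem1}, a function $\overline h\in M_{G^0}(M)$ and argue that $\psi$-multiplication by the unit $\overline h$ (or by $id-\overline h$, adapted to the right-translation setting) has a nonzero fixed point, contradicting the fpf hypothesis. The main obstacle is precisely this last verification: reworking the a-near-ring obstruction (non-injective nonzero element, hence non-unit) into the f-near-ring language (fixed point of a unit's right translation), and in particular checking carefully that $M_0(\mathbb{Z}_3)$ fails to be an f-near-ring even though it is an a-near-ring — this asymmetry is the whole reason the two theorems have different conclusions, and it must be pinned down by an explicit computation with the right translation maps on $M_0(\mathbb{Z}_3)$.
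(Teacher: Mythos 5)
There is a genuine gap, and it sits exactly where you lean on the f-property. You claim that $km_i=m_i$ for $i\geq 3$ exhibits $m_i$ as a fixed point of $\psi_k$. It does not: in this paper $\psi_k$ is the \emph{right} translation $x\mapsto x*k$, so a fixed point of $\psi_k$ is an element $x$ with $x*k=x$, whereas $km_i=m_i$ says $k*m_i=m_i$, i.e.\ it concerns left multiplication by $k$ (the $N$-group action on $M$), which the f-hypothesis does not constrain at all. This is precisely the asymmetry that makes the a-near-ring argument (where $(k-1)m_i=0$ and $k-1$ is a unit by additive closure) unavailable here, and your proposed substitute collapses. The paper's proof repairs this by invoking the density theorem a second time to produce a nonzero $n$ with $nm_1=nm_2=m_1$ and $nm_i=m_i$ for $i\geq 3$; multiplying the defining equations of $k$ by $n$ on the left shows $nk-n$ annihilates every $m_i$, hence all of $M$, so $nk=n$. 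Now $n*k=n$ \emph{is} a genuine nonzero fixed point of the right translation $\psi_k$, forcing $k=1$ and $m_1=m_2$, a contradiction. Note that this argument already excludes $l\geq 2$, not merely $l>2$, so the whole $n=2$ branch with $M_{G^0}(M)$ never arises.

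Your treatment of the $n=2$ case is therefore not needed, but as written it is also incomplete: you correctly observe that $M_0(\Bbb{Z}_3)$ fails to be an f-near-ring (any $f$ with $f(1)=f(2)$ is a nonzero fixed point of $\psi_\tau$, since the multiplication is composition), and a similar explicit function (e.g.\ $f(e_1)=e_1$, $f(e_2)=0$, extended $G$-equivariantly) gives a nonzero fixed point of $\psi_{\overline h}$ when $|G|\geq 2$; but you flag this verification as an obstacle rather than carrying it out. So the overall architecture could be completed along your lines, yet as it stands the $n>2$ exclusion is based on a misidentification of what a fixed point of a unit is, and the remaining case is left unproved.
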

\begin{proof} From Theorem \ref{Lem1} and Proposition \ref{Ring1} the results concerning the ring case follow. So, we suppose that $N$ is not a ring in the following. 

As in the proof of Theorem \ref{Theorem1} (by \cite[Theorem 4.30]{Pilz1}) there is an element $k \in N$ such that $km_1=m_2$, $km_2=m_1$ and $km_i=m_i$ for $i \geq 3$, in case such an $i \geq 3$ exists, where the elements $m_i$, $i \in \{1, \ldots ,l\}$ ($l$ a natural number), are a set of representatives of the non-zero equivalence classes of $\sim$ in $M$. As in the proof of Theorem \ref{Theorem1} one gets that $k$ is a unit in $N$. Also, by \cite[Theorem 4.30]{Pilz1} there is a non-zero element $n \in N$ such that $nm_1=m_1$, $nm_2=m_1$, $nm_i=m_i$ for $i \geq 3$, in case such an $i \geq 3$ exists.  We are now assuming that $l \geq 2$.

By multiplying the set of equations $km_1=m_2$, $km_2=m_1$ and, if $l \geq 3$, $km_i=m_i$ for $i \geq 3$ with $n$  we get the following:
\begin{enumerate} \item $nkm_1=nm_2=m_1=nm_1$ and so, $nk-n \in (0:m_1)$. 
\item $nkm_2=nm_1=m_1=nm_2$ and so, $nk-n \in (0:m_2)$.
\item for $i \geq 3$ we get $nkm_i=nm_i$ and so, $nk-n \in (0:m_i)$.
\end{enumerate}
 Thus, for each $i \in \{1, \ldots ,l\}$ we have $nk-n \in (0:m_i)$. By the fact that the elements $m_i$ are a full set of representatives of the non-zero equivalence classes w.r.t. $\sim$ we get $nk-n \in (0:M)=\{0\}$. Thus, $nk=n$ and since $n \neq 0$ is a non-zero fixed point and $k$ is a unit we must have $k=1$. Thus, $m_1=m_2$ and this contradicts the fact that $m_1 \neq m_2$. Hence we must have $l=1$, so there is only one non-zero equivalence class w.r.t. $\sim$ in $M$. As done in the proof of Theorem \ref{Theorem1} for the case of one equivalence class one shows that $N$ is a near-field. 
\end{proof}

The results of the Theorems \ref{Theorem1} and \ref{K1} can be extended to near-rings which are $J_2$-semisimple. For a near-ring $N$, $J_2(N)$ is the intersection of all the annihilators of the $N$-groups of type $2$ (see \cite[Definition 5.1]{Pilz1}) and is an ideal of the near-ring such that the factor near-ring $N/J_2(N)$ becomes a subdirect product of $2$-primitive near-rings. When $N$ has DCCN this subdirect product will be a direct product of simple near-rings by \cite[Theorem 5.31]{Pilz1}. Note that in case of a  zero symmetric near-ring $N$ with identity and DCCN we always have $N \neq J_2(N)$ (see \cite[Proposition 5.43]{Pilz1}). We now study  f- and a-near-rings $N$ with DCCN and $J_2(N)=\{0\}$ and exclude the ring case, for this situation is completely clear.

\begin{thm}\label{LA1}  Suppose that $N$ is an f-near-ring or a-near-ring with DCCN which is not a ring. 
If $J_2(N)=\{0\}$ then $N$ is a near-field or $N  \cong M_0(\Bbb{Z}_3)$.
\end{thm}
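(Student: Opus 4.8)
The plan is to reduce the statement to the already-proved simple cases, Theorems~\ref{Theorem1} and~\ref{K1}, by exploiting the structure theory of $J_2$-semisimple near-rings with DCCN. Since $J_2(N)=\{0\}$ and $N$ has DCCN, the remark preceding the theorem tells us $N/J_2(N)=N$ is a direct product of finitely many simple near-rings, say $N\cong N_1\times\cdots\times N_r$, where each $N_i$ is $2$-primitive (hence simple with DCCN), and since $N$ has an identity each $N_i$ has an identity. The key idea is that the properties of being an a-near-ring and an f-near-ring are inherited by the direct factors $N_i$, and that if $r\geq 2$ we get a fixed point / non-invertibility contradiction, so in fact $r=1$ and $N$ is itself simple; then Theorems~\ref{Theorem1} and~\ref{K1} finish the job.

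First I would check the inheritance of the two properties by each factor $N_i$. Write $1=(1_1,\dots,1_r)$ for the identity and $e_i$ for the idempotent that is $1$ in the $i$-th coordinate and $0$ elsewhere. A unit of $N$ is precisely a tuple of units, so the units of $N_i$ are exactly the $i$-th coordinates of units of $N$. For the f-property: if $u_i\in N_i$ is a non-identity unit and $r_i u_i=r_i$ in $N_i$, lift to $u=(1_1,\dots,u_i,\dots,1_r)$, a non-identity unit of $N$, and $r=(0,\dots,r_i,\dots,0)$; then $ru=r$ in $N$, forcing $r=0$, hence $r_i=0$. So each $N_i$ is an f-near-ring. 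For the a-property: $U_0(N)=\prod_i U_0(N_i)$ as a subset, and additive closure of $U_0(N)$ gives additive closure of each $U_0(N_i)$ (again by restricting to tuples supported in coordinate $i$), so each $N_i$ is an a-near-ring; each $N_i$ also carries DCCN as a homomorphic image (indeed direct factor) of $N$.

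Next I would rule out $r\geq 2$. If $r\geq2$, pick a non-identity unit $u_1\in N_1$ (using the previously noted subring $S=\langle 1\rangle$: either some $N_i$ has a non-identity unit, or every $N_i$ has trivial unit group, in which case by Proposition~\ref{Ring1}-type reasoning every $N_i$ is the ring $\mathbb Z_2$ and $N$ is a ring, contradicting our hypothesis). Form $w=(u_1,1_2,\dots,1_r)$, a non-identity unit of $N$. Then $e_2\cdot w=e_2$, so $e_2\neq 0$ is a fixed point of $\psi_w$, contradicting the f-property; for the a-near-ring case the same $w$ works since, by the observation in Section~5 that an a-near-ring is also relevant here --- more robustly, one argues as in Theorem~\ref{Theorem1}: if $N$ is an a-near-ring that is not a ring and $J_2(N)=\{0\}$ with $r\geq 2$, then each non-ring factor is by Theorem~\ref{Theorem1} a near-field or $M_0(\mathbb Z_3)$, all of which have a non-identity unit, and the same idempotent-fixed-point argument applies to whichever coordinate carries a non-identity unit, while ring factors are handled by the $|U|=1$ case giving $\mathbb Z_2$'s and hence forcing $N$ to be a ring. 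Either way $r\geq2$ is impossible once $N$ is not a ring, so $r=1$.

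Therefore $N$ is simple with DCCN and not a ring, and Theorem~\ref{K1} (f-case) or Theorem~\ref{Theorem1} (a-case) gives that $N$ is a near-field or $N\cong M_0(\mathbb Z_3)$, as claimed. The main obstacle I anticipate is the bookkeeping around the direct-product decomposition: making precise that $N/J_2(N)$ is a \emph{direct} product with identity whose factors individually have DCCN and inherit both unit-properties, and cleanly separating the sub-case where some factors are rings and some are not. The fixed-point argument via the central idempotents $e_i$ is exactly the device used at the end of the proof of Theorem~\ref{Lem1} and at the end of Theorem~\ref{Theorem1}, so once the decomposition is set up correctly the contradiction step is routine.
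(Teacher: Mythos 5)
Your overall strategy is the paper's: use $J_2(N)=\{0\}$ and the DCCN to write $N$ as a direct product of $2$-primitive near-rings with orthogonal idempotents $e_1+\cdots+e_k=1$, dispose of the trivial-unit-group case, and then use a non-identity unit in one factor to force $k=1$, after which Theorems \ref{Theorem1} and \ref{K1} apply. The f-near-ring half is correct and is exactly the paper's argument: $e_2(u_1+e_2+\cdots+e_k)=e_2$ exhibits a non-zero fixed point of a non-identity unit. But the a-near-ring half has a genuine gap: you assert that ``the same idempotent-fixed-point argument applies,'' yet an a-near-ring carries no fixed-point-freeness hypothesis, so the equation $e_2w=e_2$ yields no contradiction at all. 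The correct step --- and the one the paper uses --- is additive: $w=u_1+e_2+\cdots+e_k$ and $1=e_1+\cdots+e_k$ are two distinct units, so by additive closure of $U_0$ their difference $u_1-e_1$ must again be a unit; but $u_1-e_1$ lies in the ideal $N_1$ (equivalently, has zero in every other coordinate), and an ideal containing a unit is all of $N$, forcing $k=1$. Your ``inheritance'' paragraph implicitly contains this observation (a tuple supported in a single coordinate cannot be a unit of the product when $k\geq 2$), but it is not the argument you actually deploy at the step where it is needed.

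A second, smaller gap is the trivial-unit-group case: you invoke ``Proposition \ref{Ring1}-type reasoning'' to conclude each factor is $\mathbb{Z}_2$, but Proposition \ref{Ring1} is a statement about rings, and the factors $N_i$ are at that point only known to be $2$-primitive near-rings. The paper handles this via the density theorem, representing each $N_i$ as $M_{G^{0}}(M)$ and explicitly constructing a non-identity unit whenever $G$ has more than one orbit or $|G|\geq 2$, which forces $|M|=2$ and hence $N_i\cong\mathbb{Z}_2$ (this is also what yields Proposition \ref{NRU1} as a by-product). One could instead patch your route by applying Theorems \ref{Theorem1} and \ref{K1} to the factors --- a simple non-ring a- or f-near-ring with a single unit would have to be a near-field or $M_0(\mathbb{Z}_3)$ with one unit, which is impossible --- but some such argument must actually be supplied rather than gestured at.
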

\begin{proof} Let $J_2(N)=\{0\}$. By \cite[Theorem 5.31]{Pilz1} $N=N/J_2(N)$ is a direct product of $2$-primitive near-rings $N_i$, $i \in I$, $I$ a suitable index set and $\mid I \mid = k$, $k$ a positive integer. By \cite[Theorem 3.43]{Pilz1}, there are orthogonal idempotents $e_1, \ldots , e_k$ such that $e_1+ \ldots + e_k=1$, each $e_i \in N_i$ being the identity element in $N_i$.  

First assume that the identity is the only unit in $N$. Each of the $N_i$, $i \in I$, is a $2$-primitive near-ring with DCCN and an identity element $e_i$. Thus, for $i \in I$, $N_i \cong M_{G^{0}}(M)$ where $G^{0}:=Aut_N(M)\cup \{\overline 0\}$, $\overline 0$ being the zero function on $M$ and $G:=Aut_N(M)$, by \cite[Theorem 4.52]{Pilz1}. Since we assume that $N$ has a single unit, it follows that $N_i$ has only the unit $e_i$. The DCCN implies that $G$ has finitely many orbits on $M$, by \cite[Corollary 4.59]{Pilz1}. Let $j_1, j_2, \ldots , j_n$ ($n$ a natural number) be a complete set of orbit representatives of the action of $G$ on $M$. Suppose that $n \geq 2$. Let $f: \{j_1, \ldots , j_n\} \longrightarrow M$ be a function such that $f(j_1)=j_2$, $f(j_2)=j_1$ and $f(j_i)=j_i$, else. Then, the extension $\overline{f}$ in $M_{G^{0}}(M)$ (for the construction of this extension see the proof of Theorem \ref{Theorem1} or \cite[Theorem 4.28]{Pilz1}) is again a unit in $N_i$ which is not the identity. Consequently, 
we must have that $G$ has a single orbit in $M$. Suppose now that $|G| \geq 2$ and let $id \neq g \in G$. Let $h(j_1)=g(j_1)$. Then the extension of $h$ in $M_{G^{0}}(M)$ is a non-identity unit. This contradicts the fact that $N_i$ only has the identity as single unit. From this we see that $G=\{id\}$ and $M_{G^{0}}(M)=\Bbb{Z}_2$. Thus, $N$ is a ring.

So, we have that $|U| \geq 2$. Hence, there is a non-identity unit in $N$ and so, there is a near-ring $N_j$, $j \in I$ with another unit element $u_j \neq e_j$. 
W.l.o.g.  $j=1$. Since $N$ is the direct product of the near-rings $N_i$, $i \in I$, it is easily seen that $u_1+e_2+\ldots +e_k$ is another unit in $N$. 

Now assume that $N$ is an f-near-ring. 
Since $N_iN_j=\{0\}$ for $i \neq j$, for each  $j \geq 2$,  we use left distributivity over direct sums \cite[Theorem 2.29]{Pilz1} to see that $e_j(u_1+e_2+\ldots +e_k)=e_j$, so $e_j$ is a fixed point. 
By assumption, we now must have $e_j=0$ for $j \geq 2$. Consequently, $N=N_1$ is a $2$-primitive near-ring. By \cite[Corollary 4.47]{Pilz1}, $N$ is simple. The result now follows from Theorem \ref{K1}.

Now assume that $N$ is an a-near-ring.
Let $u_1+e_2+\ldots +e_k$ be a non-identity unit in $N$. Then $(u_1+e_2+\ldots +e_k)-(e_1+e_2+\ldots +e_k)=u_1-e_1 \in N_1$ is another unit in $N$ by assumption. Since $N_1$ is an ideal in $N$, containing the unit $u_1-e_1$ it follows that $N=N_1$. So, $N$ is a $2$-primitive and consequently a simple near-ring, so the result follows from Theorem \ref{Theorem1}.
\end{proof}

The proof of Theorem \ref{LA1} contains a result concerning near-rings with identity with DCCN which have the identity element as their single unit.
\begin{prop}\label{NRU1} Let $N$ be a zero symmetric near-ring with identity and DCCN which contains only the identity element as its single unit. If $J_2(N)=\{0\}$, then $N$ is a ring isomorphic to a direct product of the fields $\Bbb{Z}_2$.
\end{prop}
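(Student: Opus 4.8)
The plan is to extract this proposition directly from the proof of Theorem~\ref{LA1}, since that argument already treated the case ``$N$ has identity as its only unit'' in full. Concretely, I would begin by invoking \cite[Theorem 5.31]{Pilz1}: since $J_2(N)=\{0\}$ and $N$ has DCCN, $N$ decomposes as a direct product $N=\prod_{i\in I}N_i$ of finitely many $2$-primitive near-rings $N_i$, and by \cite[Theorem 3.43]{Pilz1} there are orthogonal idempotents $e_1,\dots,e_k$ with $e_1+\dots+e_k=1$ and $e_i$ the identity of $N_i$. The key observation, as in the proof of Theorem~\ref{LA1}, is that since the only unit of $N$ is $1$, each factor $N_i$ can have only $e_i$ as a unit (an element invertible in $N_i$ yields, after padding with the other $e_j$'s, a unit of $N$, which must therefore be $1$, forcing the $N_i$-component to be $e_i$).

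Next I would analyze a single $2$-primitive $N_i$. By \cite[Theorem 4.52]{Pilz1}, $N_i\cong M_{G^0}(M)$ where $M$ is the corresponding faithful $N$-group of type $2$, $G=\mathrm{Aut}_{N_i}(M)$ acts without fixed points, and $G^0=G\cup\{\overline 0\}$. The DCCN forces $G$ to have only finitely many orbits on $M$ (\cite[Corollary 4.59]{Pilz1}); let $j_1,\dots,j_n$ be orbit representatives. If $n\ge 2$, the function swapping $j_1,j_2$ and fixing the rest extends (via \cite[Theorem 4.28]{Pilz1}, exactly as in the proof of Theorem~\ref{Theorem1}) to a non-identity unit of $M_{G^0}(M)$, contradicting that $e_i$ is the only unit; so $n=1$. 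If moreover $|G|\ge 2$, pick $id\ne g\in G$ and extend the map $j_1\mapsto g(j_1)$ to a non-identity unit, again a contradiction. Hence $G=\{id\}$ and a single orbit means $|M|=1$\,---\,wait, that is wrong; with $G=\{id\}$ trivial, a single orbit forces $M$ to be a one-element... no: the orbit of $j_1$ under the trivial group is $\{j_1\}$, so a single orbit means $M=\{j_1\}$, but $M$ is a nonzero group, so actually we only conclude $G$ trivial and then $M_{G^0}(M)=M_0(M)$, the full near-ring of zero-preserving self-maps of $M$; for this to have only the identity as a unit we need $|M|=2$, i.e.\ $M\cong\mathbb{Z}_2$ and $N_i\cong\mathbb{Z}_2$. (This is the same conclusion reached in Theorem~\ref{LA1}.) Therefore each $N_i\cong\mathbb{Z}_2$ and $N\cong\mathbb{Z}_2^{\,k}$, which in particular is a ring.

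I expect no serious obstacle here, since every ingredient appears verbatim in the proof of Theorem~\ref{LA1}; the main ``work'' is simply citing that argument cleanly and noting that the hypothesis ``$N$ is not a ring'' is never actually used in that portion of the Theorem~\ref{LA1} proof\,---\,the single-unit case there was shown to force $N$ to be a ring, which is exactly the present claim. So the cleanest write-up is: reproduce (or cite) the first paragraph of the single-unit analysis in the proof of Theorem~\ref{LA1}, conclude each $N_i\cong\mathbb{Z}_2$, and hence $N\cong\prod_{i\in I}\mathbb{Z}_2$. The only subtlety to double-check is the reduction ``unit of $N_i$ $\Rightarrow$ unit of $N$'': given $v_i\in N_i$ with a two-sided inverse in $N_i$, the element $v_i+\sum_{j\ne i}e_j$ is a unit of $N$ by the direct-product structure together with $N_iN_j=\{0\}$ for $i\ne j$, so it equals $1=\sum_j e_j$, giving $v_i=e_i$; this is routine and I would state it in one line.

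\begin{proof}
Since $J_2(N)=\{0\}$ and $N$ has DCCN, by \cite[Theorem 5.31]{Pilz1} $N$ is a direct product $N=\prod_{i\in I}N_i$ of finitely many $2$-primitive near-rings $N_i$, say $|I|=k$, and by \cite[Theorem 3.43]{Pilz1} there are orthogonal idempotents $e_1,\dots,e_k$ with $e_1+\dots+e_k=1$ and $e_i$ the identity of $N_i$. If some $v_i\in N_i$ were invertible in $N_i$, then $v_i+\sum_{j\neq i}e_j$ would be a unit of $N$ (using $N_iN_j=\{0\}$ for $i\neq j$), hence equal to $1=\sum_j e_j$, forcing $v_i=e_i$. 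Thus each $N_i$ has $e_i$ as its only unit.

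Fix $i$ and write $N_i\cong M_{G^0}(M)$ with $M$ the corresponding faithful $N_i$-group of type $2$, $G=\mathrm{Aut}_{N_i}(M)$ acting without fixed points, and $G^0=G\cup\{\overline 0\}$, by \cite[Theorem 4.52]{Pilz1}. By \cite[Corollary 4.59]{Pilz1} the group $G$ has finitely many orbits on $M$; let $j_1,\dots,j_n$ be orbit representatives. If $n\geq 2$, the function $\{j_1,\dots,j_n\}\to M$ with $f(j_1)=j_2$, $f(j_2)=j_1$, $f(j_l)=j_l$ otherwise extends, exactly as in the proof of Theorem \ref{Theorem1} (cf.\ \cite[Theorem 4.28]{Pilz1}), to a non-identity unit $\overline f\in M_{G^0}(M)$, contradicting the previous paragraph; so $n=1$. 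If moreover $|G|\geq 2$, choosing $id\neq g\in G$ and extending the map $j_1\mapsto g(j_1)$ produces another non-identity unit, again a contradiction. Hence $G=\{id\}$, so $M_{G^0}(M)=M_0(M)$; as this has only the identity as a unit, $|M|=2$ and $N_i\cong\mathbb{Z}_2$.

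Therefore $N\cong\prod_{i\in I}N_i\cong\mathbb{Z}_2^{\,k}$, a direct product of copies of the field $\mathbb{Z}_2$; in particular $N$ is a ring.
\end{proof}
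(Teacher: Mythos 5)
Your proof is correct and follows essentially the same route as the paper, which simply observes that the single-unit portion of the proof of Theorem~\ref{LA1} never uses the a- or f-near-ring hypothesis; your write-up just makes that argument explicit. The only cosmetic difference is that you justify the final step via ``$M_0(M)$ has only the identity as a unit forces $|M|=2$'' rather than via the single-orbit count, but both yield $N_i\cong\Bbb{Z}_2$.
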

\begin{proof} This result follows from the proof of Theorem \ref{LA1} where we considered the case that a semisimple near-ring $N$ with identity has only a single unit, where the further assumption that $N$ is an a- or f-near-ring is not needed.
\end{proof}

\subsection{The order of the group of units}\label{Size}
In this section we consider a situation where f-near-rings are a-near-rings. 

Let $N$ be a finite f-near-ring and let $2 \leq k:=|U|$ be the size of the group of units. 
By Theorem \ref{CorA} we know that $|N| = p^n$ for some prime number $p$ and natural number $n$. 
Since $U$ acts as a group of fpf automorphisms we must have that $|U|$ divides $|N|-1$, so $k | p^n-1$. 

We now assume that $N$ is not a near-field. So, there must exist an ideal $I$ in $N$ by Theorem \ref{K1}. Let $m:=|I|$. 
Since $(I,+)$ is a subgroup of $(N,+)$ we must have $m=p^l$ for some natural number $l<n$. 
On the other hand, $I*U \subseteq I$ since $I$ is an ideal of the near-ring $N$. This implies that $U$ also acts as a group of fpf automorphisms on $(I,+)$. Consequently, we have that $k | p^l-1$ and so $k | gcd(p^n-1, p^l-1)=p^{gcd(n,l)}-1$ (see for example \cite[Hilfssatz B.1]{Wae}). 
If $n$ is a prime we now have that $k | p-1$. On the other hand, $U$ contains at least $p-1$ elements, namely the non-zero elements of the near-ring generated by $1$ (see the proof of Theorem \ref{CorA}). So we must have $|U|=p-1$.

Therefore we have shown the following.
\begin{thm}\label{PK1} Let $N$ be a finite f-near-ring. We assume that $N$ is not a near-field and $|N| = p^q$, $p, q$ being prime numbers. 
Then, $|U|=p-1$ and $(U \cup \{0\},+,*)$ is a field isomorphic to $\Bbb{Z}_p$. So, $N$ is an {a-near-ring}.
\end{thm}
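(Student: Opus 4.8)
The plan is to pin down $|U|$ by squeezing it between a lower bound coming from the prime subnear-ring generated by $1$ and an upper bound coming from a proper ideal, and then read off the a-near-ring property. First I would invoke Theorem~\ref{CorA}: since $N$ is finite and f, $(N,+)$ is elementary abelian, so with $|N|=p^q$ it is $\mathbb{Z}_p^q$ and $N$ has characteristic $p$. As in the proof of Theorem~\ref{CorA}, the subnear-ring $S=\langle 1\rangle$ is a copy of the prime field $\mathbb{Z}_p$, and every one of its $p-1$ nonzero elements is a unit of $N$. This already gives $|U|\ge p-1$, and it shows that once the equality $|U|=p-1$ is proved we automatically have $U\cup\{0\}=S\cong\mathbb{Z}_p$, a subnear-field of $N$, so that $N$ is an a-near-ring. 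Hence the entire content reduces to showing $|U|=p-1$, and it is worth noting that no case split on $|U|$ will be needed: the argument below even forces $p=2$ when $|U|=1$.

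Next I would use the standard counting fact that a finite group $\Phi$ of fixed-point-free automorphisms of a finite group $A$ partitions $A\setminus\{0\}$ into orbits of size $|\Phi|$ (every nonzero point has trivial stabiliser), so $|\Phi|$ divides $|A|-1$. Applied to $\Psi_U$ acting on $(N,+)$ — where $u\mapsto\psi_u$ is injective because $\psi_u(1)=u$, so $|\Psi_U|=|U|$ — this yields $|U|\mid p^q-1$. Now I exploit that $N$ is not a near-field: being finite it satisfies the DCCN, so by Theorem~\ref{K1} a simple f-near-ring is a near-field, whence $N$ cannot be simple and there is a proper nonzero ideal $I\lhd N$; then $(I,+)\le(N,+)$ forces $|I|=p^l$ with $1\le l<q$. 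Because $I$ is an ideal we have $Iu=\psi_u(I)\subseteq I$ for each $u$, so for a unit $u$ the injective endomorphism $\psi_u$ restricts to an automorphism of the finite group $(I,+)$, and being fixed-point-free on $N$ it is fixed-point-free on $I$. Re-running the same orbit count on $I\setminus\{0\}$ gives $|U|\mid p^l-1$.

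Finally I would combine the two divisibilities via the identity $\gcd(p^q-1,p^l-1)=p^{\gcd(q,l)}-1$ \cite[Hilfssatz~B.1]{Wae}: since $q$ is prime and $1\le l<q$ we have $\gcd(q,l)=1$, so $|U|\mid p-1$, and together with $|U|\ge p-1$ this forces $|U|=p-1$. Then $U=S\setminus\{0\}$, so $U\cup\{0\}=S\cong\mathbb{Z}_p$ is a subnear-field and $N$ is an a-near-ring. I expect the only step requiring genuine care to be the passage to the ideal $I$ — verifying that $I$ is $\Psi_U$-invariant and that the fixed-point-free property genuinely transfers to $(I,+)$ so that the orbit count applies there; the remaining steps are direct appeals to Theorems~\ref{CorA} and~\ref{K1} and to elementary divisibility.
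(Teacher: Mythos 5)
Your proposal is correct and follows essentially the same route as the paper: elementary abelianness from Theorem~\ref{CorA}, the divisibility $|U|\mid p^q-1$ from the fixed-point-free action, the transfer of that action to a proper nonzero ideal (which exists by Theorem~\ref{K1} since $N$ is not a near-field) to get $|U|\mid p^l-1$, the $\gcd$ identity, and the lower bound $|U|\ge p-1$ from the prime subfield generated by $1$. Your additional observation that the case $|U|=1$ needs no separate treatment (it forces $p=2$) is a small but correct refinement of the paper's standing assumption $|U|\ge 2$.
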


Thus we can see that the near-ring constructed in Example \ref{UnitEx1}  is a minimal example of an f-near-ring that is not an a-near-ring.
The lowest integers $p^q$ with $q$ not prime are $2^{4}=16$, $2^{6}=64$ and $3^{4}=81$.
A search in SONATA showed that no f-near-ring that is not an a-near-ring of order $2^{4}$ exists.
So Example \ref{UnitEx1} is the smallest proper f-near-ring by ``dimension'' of the additive group.
A proper f-near-ring of order $2^{6}$ must have the group of units of order 3 or 7, as the units must also act fpf on a niontrivial ideal.
However cyclic groups of units of these orders do not occur without being additively closed. 
Thus Example \ref{UnitEx1} is also  the smallest proper f-near-ring by size.

In general, the structure of $U$ is not like in the theorem above. This follows easily from Theorem \ref{Tim1} which shows that we can construct f-near-rings of size $|F^k|$, $F$ a given near-field and $k$ a natural number where the units have size $|F \setminus \{0\}|$. In the next subsection we will construct all f-near-rings of size $p^{2}$ which are not near-fields, $p$ a prime number, where clearly we have $|U|=p-1$. We close this subsection with a class of examples of af-near-rings of size $p^k$, $p$ a prime number and $k$ a natural number where we have $|U|=p-1$. The examples are obtained by the construction method of Theorems \ref{A1} and \ref{A2}.

\begin{example}\label{UnitEx}
Let $(M,+):=(\Bbb{Z}_p^k,+)$ be the $k$-dimensional direct sum of the group $(\Bbb{Z}_p,+)$ written as column vectors and $m:=(0,\ldots ,0,1)^t$. 
Let 
\begin{align*}
 S:=\{\left(\begin{array}{cccc}
0&\cdot&0&a_1\\ \cdot & \cdot & \cdot & \cdot \\ \cdot & \cdot & \cdot & \cdot \\0&\cdot & 0 & a_k
\end{array}\right) \in &\Bbb{Z}_p^{k\times k} \mid \exists i \in \{1, \ldots k-1\}: a_i \neq 0\} \\&\cup \{diag(\lambda, \ldots, \lambda) \in \Bbb{Z}_p^{k\times k} \mid \lambda \in \Bbb{Z}_p\}
\end{align*}
$S$ is a semigroup of group endomorphisms of $(M,+)$ via matrix multiplication $\cdot $ using the field multiplication in $\Bbb{Z}_p$. For each element $(m_1,\ldots , m_k)^t \in M$, there is a unique matrix $s \in S$ such that $(m_1, \ldots , m_k)^t=s\cdot  m^t$.

For elements $(m_1, \ldots , m_k)^t \in M$ and $(n_1, \ldots , n_k)^t \in M$ we define the multiplication $* $ as 
\begin{align*}
 (n_1, \ldots , n_k)^t* (m_1, \ldots , m_k)^t=(n_1, \ldots , n_k)^t* (s\cdot m):=s\cdot (n_1, \ldots , n_k)^t
\end{align*}
By Theorem \ref{A1} $(M,+,*)$ is an f-near-ring, $(M,+,*)$ is not a near-field and the units are the elements of the form $(0, \ldots ,0,a_k)^t$ with non-zero $a_k$, so there are $p-1$ many units in the near-ring.
Also $Gm = \left<m\right> \leq (M,+)$ so we have an af-near-ring.
\end{example}
The example given in Example \ref{UnitEx} can also be derived from Theorem \ref{Tim1}. We will point this out, using the notation of Theorem \ref{Tim1}. Take the field $(\Bbb{Z}_p,+,*)$, form the $k$-dimensional sum of $(\Bbb{Z}_p,+)$ to get the group $(V,+)$. For $i \in \{1, \ldots , k\}$ let $\alpha_i$ be the identity map. Then, $(V,+,*_1)$ as constructed in Theorem \ref{Tim1} gives the same near-ring as constructed in Example \ref{UnitEx}.

\subsection{Near-rings of order $p^{2}$}
We study f-near-rings $N$  of size $p^{2}$ that are not nearfields.  
From Theorem \ref{PK1} we know that this implies they are a-near-rings. 
We will see that the construction of Theorem \ref{Tim1} gives all such near-rings.

\begin{thm}\label{GT1} 
Let $(N,+,*)$ be an f-near-ring of order $p^{2}$ which is not a near-field. Then $N$ is also an \emph{a-near-ring}. 
Moreover, there exists a group $(V,+)$ isomorphic to $(N,+)$ and a map $\alpha: U_0 \rightarrow U_0$ with $\alpha(0)=0$ and  $\alpha$ an automorphism of the multiplicative group $(U,*)$ such that $(N,+,*)$ is isomorphic to the near-ring $(V,+,*_1)$, where the near-ring operations are defined as in Theorem \ref{Tim1}. 
\end{thm}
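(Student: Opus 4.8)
The plan is to use the structure theorem for f-near-rings (Theorem \ref{A1}) together with the strong constraints on near-rings of order $p^2$ coming from Theorems \ref{PK1} and \ref{CorA}. By Theorem \ref{PK1}, since $N$ is not a near-field, $|U| = p-1$ and $U_0$ is a field isomorphic to $\mathbb{Z}_p$; in particular $N$ is already an a-near-ring, which disposes of the first sentence. By Theorem \ref{CorA}, $(N,+)$ is elementary abelian, hence $(N,+) \cong (\mathbb{Z}_p^2,+) =: (V,+)$. So we may as well work inside $V$ and must produce a coordinatization in which the multiplication takes the form $*_1$ of Theorem \ref{Tim1} with $k=2$, $F = \mathbb{Z}_p$, and a single map $\alpha = \alpha_1 \colon U_0 \to U_0$.

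The key step is to choose coordinates well. Let $1$ be the identity of $N$ and let $S = \{1\} \cup \langle 1 \rangle$-generated subring; by the proof of Theorem \ref{CorA} this is a copy of $\mathbb{Z}_p$ and equals $U_0$ as a set. Pick a nonzero element $e \in V$ not lying in the line $U_0$, so that $\{e, 1\}$ is a basis of $(V,+)$; write a general element as $(x_1, x_2)$ meaning $x_1 \cdot e + x_2 \cdot 1$ with $x_i \in \mathbb{Z}_p$. Under this identification $m := 1$ becomes $(0,1)$ and $U_0$ becomes $\{(0, a) : a \in \mathbb{Z}_p\}$. Now invoke Theorem \ref{A1}: $S_{\mathrm{NR}} := \{\psi_r : r \in N\}$ is a semigroup of endomorphisms of $(V,+) = \mathbb{Z}_p^2$, i.e. $2\times 2$ matrices over $\mathbb{Z}_p$, with $G := \Psi_U$ a fixed-point-free group of order $p-1$ and $E := \Psi_{N\setminus U}$ a semigroup of singular matrices containing $0$, and for each $n \in V$ a unique $s_n \in S_{\mathrm{NR}}$ with $s_n(m) = n$, and $a * b = s_b(a)$. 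The heart of the argument is to read off exactly which matrices can appear.

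First analyze $E$: if $z \notin U$ then $\psi_z$ is a singular matrix, so it has rank $\le 1$; moreover $\psi_z(m) = z$ and as $z$ ranges over the $p^2 - (p-1)$ non-units, the vectors $\psi_z(m)$ must exhaust $V \setminus U = $ (all vectors with $x_1 \ne 0$) $\cup \{(0,0)\}$. A rank-$\le 1$ matrix with prescribed image vector $\psi_z(m) = z$ having $z_1 \ne 0$ is forced: its column space is the line through $z$, and — using that $E \cup G$ is closed under composition and that $\psi_z \circ \psi_w(m) = \psi_z(w)$ — one pins down that $\psi_z$ acts by $z \mapsto$ (depends only on the second coordinate $x_2$ of the input, via the near-field multiplication on the $x_2$-line), exactly matching the "otherwise" clause $\underline x *_1 \underline a = (x_k * a_1, \dots, x_k * a_k)$ of Theorem \ref{Tim1}. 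Next analyze $G$: $\psi_u$ for $u = (0,a) \in U$ fixes the line $U_0$ setwise (indeed $1 \cdot u = u$ forces $\psi_u(0,1) = (0,a)$) and, being fpf of order dividing $p-1$ on $\mathbb{Z}_p^2$, restricts on $U_0$ to multiplication by $a$; on the complementary basis vector $e = (1,0)$ we get $\psi_u(1,0) = (\alpha(a), 0)$ for some function $\alpha \colon \mathbb{Z}_p \to \mathbb{Z}_p$ (the $e$-line must be preserved too, since $G$ is a cyclic group of diagonalizable — over $\mathbb{Z}_p$ or its extension — fpf matrices fixing $U_0$; here one checks $\psi_u$ is actually \emph{diagonal} in the basis $\{e,1\}$ because it commutes appropriately / because both eigen-lines are $G$-invariant). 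That $a \mapsto \alpha(a)$ is multiplicative and fixes $0$ and $1$ follows from $G$ being a group with identity $\psi_1 = \mathrm{id}$ and from $\psi_u \circ \psi_{u'} = \psi_{uu'}$. This exhibits $\psi_u$ acting as $\underline x \mapsto (x_1 * \alpha(a), x_2 * a)$, matching the first clause of $*_1$ with $k = 2$, $\alpha_1 = \alpha$, $\alpha_2 = \mathrm{id}$.

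Assembling these: the multiplication $a * b = s_b(a)$ on $V$ coincides with $*_1$ in the chosen basis, with $F = \mathbb{Z}_p$ and the map $\alpha$ just constructed, and $\alpha(0)=0$, $\alpha$ an automorphism of $(U,*) = (\mathbb{Z}_p^\times, \cdot)$. Hence $(N,+,*) \cong (V,+,*_1)$ as required. The main obstacle I anticipate is the matrix bookkeeping in the two analyses above — in particular, proving rigorously that each $\psi_u \in G$ is simultaneously "diagonalized" by the splitting $V = \mathbb{Z}_p e \oplus U_0$ (equivalently, that the $e$-line is $G$-invariant, not just the $U_0$-line), and that the singular maps in $E$ genuinely depend only on the $U_0$-coordinate of their argument. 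Both should come out of the closure of $S_{\mathrm{NR}} = G \cup E$ under composition combined with the uniqueness clause "for all $n$ there is a unique $s_n$ with $s_n(m)=n$": that uniqueness rigidly determines every matrix in $S_{\mathrm{NR}}$ from its value on $m$, which is the lever that forces the required normal form. Finally one remarks that all verifications that the resulting $(V,+,*_1)$ is indeed an af-near-ring are already supplied by Theorem \ref{Tim1}, so nothing further need be checked.
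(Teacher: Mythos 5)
Your reduction of the first assertion to Theorem \ref{PK1} matches the paper, and your overall plan --- split $(N,+)\cong\mathbb{Z}_p^{2}$ into a line complementary to $U_0$ plus the line $U_0$, and show the multiplication takes the $*_1$ form of Theorem \ref{Tim1} --- is also the paper's. But there is a genuine gap at exactly the point you flag as ``the main obstacle'': you take the complementary basis vector $e$ to be an \emph{arbitrary} element outside $U_0$, and you then need (i) the line $\mathbb{Z}_p e$ to be invariant under every $\psi_u$, $u\in U$, and (ii) the kernel of every singular map $\psi_z$, $z\notin U$, to be exactly $\mathbb{Z}_p e$. Neither follows from ``closure of $S$ under composition plus uniqueness of $s_n$'': those are basis-free properties holding in every f-near-ring and cannot single out a basis. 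Claim (i) is in fact false for a generic choice of $e$: in the Theorem \ref{Tim1} near-ring with $k=2$, $F=\mathbb{Z}_5$ and $\alpha_1(x)=x^{-1}$, the unit $(0,a)$ acts as $\mathrm{diag}(a^{-1},a)$ in the basis $\{e,1\}$, so the line through $e+1$ is not invariant whenever $a\neq a^{-1}$. The real content of the theorem is precisely that one distinguished line does both jobs simultaneously, and that must be proved, not assumed.

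The missing idea, which the paper supplies, is that the complementary line is the radical $J_2(N)$. By Theorem \ref{LA1} and \cite[Proposition 5.43]{Pilz1}, $\{0\}\neq J_2(N)\neq N$, so $|J_2(N)|=p$ and $N=J_2(N)\oplus U_0$ additively. Because $J_2(N)$ is an ideal, $J_2(N)*u\subseteq J_2(N)$ for all $u\in U$, which gives (i) and produces $\alpha$ via $\psi^{-1}(1)*u=d_u\cdot\psi^{-1}(1)$; and because $Nn$ is an $N$-group of type $2$ for every nonzero non-unit $n$, one gets $J_2(N)*n=\{0\}$, which gives (ii) and hence $(j+u)*n=u*n$ for non-units $n$. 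With these two facts in hand your matrix bookkeeping (or the paper's direct verification that $\beta(j+u)=(\psi(j),u)$ is multiplicative) goes through; without them the argument does not close. A workable patch inside your framework is to take $e$ spanning $J_2(N)$ --- equivalently, spanning $\ker\psi_z$ for some nonzero non-unit $z$ --- and to prove by the radical/annihilator argument that this line is independent of $z$ and $\Psi_U$-invariant.
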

\begin{proof}That $U_0$ is a subfield of the near-ring of order $p$ follows from Theorem \ref{PK1}. According to Theorem \ref{LA1}, $J_2(N) \neq \{0\}$ and since $1 \in N$, $J_2(N) \neq N$ by \cite[Proposition 5.43]{Pilz1}. Let $n \in N\setminus U$. Then, $Nn \neq N$ is an $N$-group of size $p$. Since $N$ has an identity element, for any non-zero element $mn \in Nn$ we have $\{0\} \neq Nmn \subseteq Nn$. But $Nn$ is a group of order $p$, so we must have $Nmn=Nn$ and consequently the $N$-group $Nn$ is of type $2$. Since $J_2(N)$ is the intersection of the annihilators of all $N$-groups of type $2$ of a near-ring $N$ (see \cite[Definition 5.1]{Pilz1}), $J_2(N)Nn=\{0\}$. Since $1 \in N$ we see that $J_2(N)n=\{0\}$. Also, we have that $N=J_2(N)+U_0$ and $J_2(N) \cap U_0=0$. Consequently, for each $n \in N$ there is a unique $j \in J_2(N)$ and a unique $u \in U_0$ such that $n=j+u$. Let $u \in U$. Since $U_0$ is a group of order $p$, $u$ is additively generated by the identity element, so $u=1+\ldots +1$, with 
$k$ 
summands, say. We will use the notation $u=k\cdot1$ for that and whenever we have a natural number $k$ and an element $n \in N$, then $k\cdot n$ will mean $n+\ldots +n$ with $k$ summands. The product of two natural numbers $m_1, m_2$ will be also denoted by $m_1 \cdot m_2$. 

Since $J_2$ is a group of order $p$, the groups $J_2$ and $U_0$ are isomorphic by a group isomorphism $\psi$. 
We let $\psi(J_2)=U_0$. Note that the near-ring multiplication $*$ when restricted to the units of the near-ring is a field multiplication because $(U_0,+,*)$ is a field.

Let $u \in U$ and $0 \neq \psi^{-1}(1) \in J_2$. Then, $0 \neq \psi^{-1}(1)*u$ because $u$ is a unit. Also, $J_2(N)$ is an ideal of the near-ring and therefore,  $\psi^{-1}(1)*u \in J_2(N)$. Since $J_2(N)$ is a group of order $p$, any element in $J_2(N)$ is additively generated by $\psi^{-1}(1)$. So, there is a smallest natural number $d_u \in \{1, \ldots, p-1\}$ such that $\psi^{-1}(1)*u = \psi^{-1}(1)+ \ldots + \psi^{-1}(1)=d_u \cdot \psi^{-1}(1)$. Now consider that map $\alpha: U_0 \rightarrow U_0, 0 \mapsto 0, 0 \neq u \mapsto d_u \cdot 1$. $\alpha$ is well defined. Note that for a unit $u$ we cannot have $\alpha(u)=0$. Let $u_1, u_2 \in U$ and suppose that $\alpha(u_1)=\alpha(u_2)$. Then, $\psi^{-1}(1)*u_1 = \psi^{-1}(1)*u_2$ and by fixedpointfreeness of the action of the units we get $u_1=u_2$. So, $\alpha$ is injective and consequently by finiteness bijective. We now show that $\alpha$ is a multiplicative homomorphism of $(U, *)$. To this end let $u_1, u_2 \in U$. Then, by associativity of the near-ring multiplication we have $(\psi^{-1}(1)*u_1)*u_2=\psi^{-1}(1)*(u_1*u_2)$. $(\psi^{-1}(1)*u_1)*u_2=(d_{u_1}\cdot \psi^{-1}(1))*u_2=(\psi^{-1}(1)+\ldots + \psi^{-1}(1))*u_2$  with $d_{u_1}$ summands. $\psi^{-1}(1)*u_2=d_{u_2}\cdot \psi^{-1}(1)$ (here we have $d_{u_2}$ many summands of $\psi^{-1}(1)$). Using right distributivity and the fact that $J_2(N)$ has order $p$ we get $(\psi^{-1}(1)*u_1)*u_2= (d_{u_1}\cdot d_{u_2} (mod p)) \cdot \psi^{-1}(1)$. On the other hand, $\psi^{-1}(1)*(u_1*u_2)=d_{u_1*u_2}\cdot \psi^{-1}(1)$ and from that we see that $d_{u_1*u_2}=d_{u_1}\cdot d_{u_2} (mod p)$. But this precisely means that $\alpha$ is a multiplicative homomorphism. So, $\alpha$ is a zero preserving automorphism of the multiplicative group $(U,*)$, as required in the construction of Theorem \ref{Tim1}.

We now will show that w.r.t.\ the field multiplication $(U,*)$, $N$ is isomorphic to a near-ring as constructed in Theorem \ref{Tim1}. To this end let $(V,+,*_1)$ be a near-ring as constructed in Theorem \ref{Tim1}, where $(V,+)=(U_0 \times U_0,+)$, $+$ the field addition in $(U_0,+,*)$  and multiplication $*_1$ defined as $(x_1,x_2)*_1(0,a_2):=(x_1*\alpha(a_2), x_2*a_2)$ and if $a_1 \neq 0$, $(x_1,x_2)*_1(a_1,a_2):=(x_2*a_1, x_2*a_2)$. 

We now claim that the map $\beta: N \rightarrow V, j_1+u_1 \mapsto (\psi(j_1),u_1)$ is a near-ring isomorphism. Since $\psi(J_2)=U_0$ and $(N,+)$ is the direct sum of the groups $(J_2, +)$ and $(U_0,+)$ it is clear that $\beta$ is an isomorphism of groups, so it remains to show that $\beta$ is a multiplicative map. 
We distinguish two cases. First we calculate $\beta((j_1+u_1)*(j_2+u_2))$ with $j_2 \neq 0$. 
Let $n_1=j_1+u_1$ an arbitrary element in $N$ and $n_2=j_2+u_2$ a non-unit in $N$, thus, $j_2 \neq 0$. Then, using right distributivity of the near-ring, $n_1n_2=j_1(j_2+u_2)+u_1(j_2+u_2)$. 
 $j_1(j_2+u_2)=0$ because $n_2$ is a non-unit. 
Again by right distributivity and using that $1$ is the identity of the near-ring and $u_1 = k\cdot 1$, $u_1(j_2+u_2)=(1+\ldots +1)(j_2+u_2)=(j_2+u_2)+ \ldots + (j_2+u_2)=k\cdot (j_2+u_2)$. Since $(N,+)$ is an abelian group (by Theorem \ref{CorA}, anyhow any group of order $p^{2}$ is abelian), this is $k\cdot j_2 + k \cdot u_2 = (1+ \ldots +1)j_2 + (1+ \ldots + 1)u_2=u_1j_2+u_1u_2$. 
Thus we have that the units distribute from the left hand side and $n_1n_2=(j_1+u_1)(j_2+u_2)=u_1n_2=u_1j_2+u_1u_2$ in case $n_2$ is a non-zero non-unit. 
Thus, $\beta((j_1+u_1)*(j_2+u_2))=\beta(u_1*j_2+u_1*u_2)$. 
Since $\beta$ is an additive homomorphism, this gives $\beta(u_1*j_2)+\beta(u_1*u_2)=(\psi(u_1*j_2),0)+(0,u_1*u_2)$. 
Let $u_1=k\cdot 1$. 
Then the last expression equals $(\psi(k\cdot j_2),0)+(0,u_1*u_2)$. 
Now $\psi$ is an additive homomorphism and $k\cdot j_2=j_2+ \ldots + j_2$ with $k$ many summands. 
So we have $\beta((j_1+u_1)*(j_2+u_2))=(k\cdot \psi(j_2),0)+(0,u_1*u_2)$. 

On the other hand, $\beta(j_1+u_1)*_1\beta(j_2+u_2)=(\psi(j_1), u_1)*_1(\psi(j_2),u_2)=(u_1*\psi(j_2), u_1*u_2)=((k\cdot 1)*\psi(j_2),u_1*u_2)=(k \cdot \psi(j_2), u_1*u_2)$. So, in case $j_2 \neq 0$ we see that $\beta$ is a multiplicative map.

Now consider the second case, namely $\beta((j_1+u_1)*u_2)=\beta(j_1*u_2+u_1*u_2)=(\psi(j_1*u_2),u_1*u_2)$. On the other hand, $\beta(j_1+u_1)*_1\beta(u_2)=(\psi(j_1),u_1)*_1(0,u_2)=(\psi(j_1)*\alpha(u_2), u_1*u_2)$. In order to show that these expressions are the same, we have to compare the elements $\psi(j_1*u_2)$ and $\psi(j_1)*\alpha(u_2)$ and show that they are equal. Since $J_2$ is a cyclic group of order $p$, $j_1$ is a sum of the elements $\psi^{-1}(1)$, $j_1=\lambda \cdot \psi^{-1}(1)$, say with $\lambda \in \{0, \ldots ,p-1\}$. So, $j_1*u_2= (\lambda \cdot \psi^{-1}(1))*u_2=d_{u_2} \cdot (\lambda \cdot  \psi^{-1}(1))=(\lambda \cdot d_{u_2}) \cdot \psi^{-1}(1)$. Since $\psi$ is a homomorphism we get that $\psi(j_1*u_2)=\psi((\lambda\cdot d_{u_2}) \cdot \psi^{-1}(1))=(\lambda \cdot d_{u_2})\cdot \psi(\psi^{-1}(1))=(\lambda \cdot d_{u_2}) \cdot 1$. By definition of $\alpha$, $\alpha(u_2)=d_{u_2}\cdot 1$. So, $\psi(j_1)*\alpha(u_2)=\psi(\lambda \cdot \psi^{-1}(1))*(d_{u_2}\cdot 1)=(\lambda \cdot \psi(\psi^{-1}(1))*(d_{u_2}\cdot 1)$, 
using that $\psi$ is an additive homomorphism. This gives $(\lambda\cdot  d_{u_2})\cdot 1$. So, indeed $\psi(j_1*u_2)= \psi(j_1)*\alpha(u_2)$ and this proves that $\beta$ is an isomorphism.
\end{proof}

Hence we see that the construction method of Theorem \ref{Tim1} gives us all f-near-rings of order $p^{2}$, $p$ a prime number. 

\section{Conclusion}

In this paper we have looked at the situation for near-rings with their units obeying one or the other of two
special properties: acting fixed point freely (f-near-rings) and being additively closed (including the zero of the near-ring) (a-near-rings).
For rings with DCCL the situation is clear: all such rings are fields. 
It remains open whether infinite a-rings or f-rings exist that are not fields.
For proper near-rings, we have seen that more complex and interesting situations can arise.

We have investigated a-near-rings only in the case that the group of units is nontrivial.
We have seen that a-rings with trivial group of units are a direct product of copies of $\Bbb{Z}_2$.
It remains open what a-near-rings exist with trivial unit group and what can be said about them more than that their additive groups are elementary abelian 2-groups.

We have seen that  the additive group of a-near-rings are of prime exponent, while {f-near-ring} additive groups are elementary abelian.
It is shown that in many cases, for instance when the order of the nearring is $p^q$ for primes $p,q$, the units form a prime field.
Examples have been constructed  to show that all near-fields can arise as the units of an {af-near-ring}. 
On the other hand, it is unclear whether all fixedpointfree automorphism groups can appear as the units in an f-near-ring.

It has been shown that {f-near-rings} which are not near-fields are nonsimple and not $J_2$-semisimple. 
It remains open what special structures the 
$J_2$ radical possesses. We have seen that the radical has an important role in determining the structure of f-nearrings of order $p^{2}$. It is an open question to the authors if the Jacobson radical of type $2$ in the type of near-rings we study is always nilpotent and if the construction method of Theorem \ref{Tim1} also gives all {af-near-rings} of some higher prime power order. 
The question of deciding when f-near-rings constructed by Theorems \ref{A1}, \ref{Tim1} are isomorphic would be an interesting problem to consider.

\section{Acknowledgements}

We would like to thank Prof. G\"unter Pilz and our colleagues Wen-Fong Ke and Peter Mayr for input during the process of preparing this paper.

\end{document}